\numberwithin{equation}{section}
    \newfont{\bg}{cmr10 scaled\magstep5}
    \newcommand{\bigzerou}{\smash{\lower1.7ex\hbox{\bg 0}}}
     \newcommand{\M}{\mathcal{M}}
      \newcommand{\PP}{\mathcal{P}}
    \newcommand{\s}{\textbf{s}}
     \newcommand{\n}{\textbf{n}}
      \newcommand{\q}{\textbf{q}}
        \newcommand{\QQ}{\mathcal{Q}}
           \newcommand{\LL}{\mathcal{L}}
\theoremstyle{thmstyleone}%
\newtheorem{theorem}{Theorem}
\newtheorem{proposition}[theorem]{Proposition}%
\theoremstyle{thmstyletwo}%
\theoremstyle{thmstylethree}%
\newtheorem{lemma}{Lemma}%
\begin{document}

\title[Nonlocal Manifold Poisson Models]{A Second-Order Nonlocal Approximation for Manifold Poisson Model with Dirichlet Boundary  \thanks{This 
        work of YZ and ZS  were supported by NSFC grant 12071224, 11671005.}}


\author[1]{\fnm{Yajie} \sur{Zhang}}\email{z0005149@zuel.edu.cn}

\author*[2]{\fnm{Zuoqiang} \sur{Shi}}\email{zqshi@tsinghua.edu.cn}
\equalcont{These authors contributed equally to this work.}

\affil[1]{\orgdiv{Department of Statistics and Mathematics}, \orgname{Zhongnan University of Economics and Laws}, \orgaddress{\street{Minzu Avenue}, \city{Wuhan}, \postcode{430082}, \state{Hubei}, \country{China}}}

\affil*[2]{\orgdiv{Department of Mathematical Sciences}, \orgname{Tsinghua University}, \orgaddress{\street{Zhongguancun Avenue}, \city{Beijing}, \postcode{100084}, \state{Beijing}, \country{China}}}


\abstract{Recently, we constructed a class of nonlocal Poisson model on manifold under Dirichlet boundary with global $\mathcal{O}(\delta^2)$ truncation error to its local counterpart, where $\delta$ denotes the nonlocal horizon parameter. In this paper, the well-posedness of such manifold model is studied. We utilize Poincare inequality to control the lower order terms along the $2\delta$-boundary layer in the weak formulation of model.
The second order localization rate of model is attained by combining the well-posedness argument and the truncation error analysis. Such rate is currently optimal among all nonlocal models. Besides, we implement the point integral method(PIM) to our nonlocal model through 4 specific numerical examples to illustrate the quadratic rate of convergence on the other side.}

\keywords{Manifold Poisson equation, Dirichlet boundary, nonlocal approximation, well-posedness, second order convergence, point integral method.}



\maketitle

\section{Introduction}

Partial differential equations on manifolds have been applied in many areas including material science \cite{CFP97} \cite{EE08} , fluid flow \cite{GT09} \cite{JL04}, biology physics \cite{BEM11}  \cite{ES10} \cite{NMWI11}, machine learning   \cite{belkin2003led} \cite{Coifman05geometricdiffusions}  \cite{LZ17}  \cite{MCL16}  \cite{reuter06dna} and image processing   \cite{CLL15}    \cite{Gu04}  \cite{KLO17}  \cite{LWYGL14}   \cite{LDMM}   \cite{Peyre09}    \cite{Lui11}. Among all the manifold PDEs in the literature, the Poisson model have been studied most frequently as it is mathematically interesting and usually reveals much information of the manifold. One recent approach in the numerical analysis of Poisson model is its nonlocal approximation. The advantage for nonlocal model is that it always avoids the use of spatial differential operator, hence new meshless numerical scheme can be explored.
Due to the difficulty for mesh generation on manifolds and the demand of solving manifold Poisson model numerically,  it is necessary to propose a certain nonlocal manifold Poisson model that can accurately approximate its  local counterpart, while being able to be solved by proper meshless numerical scheme on the other hand.

In this paper, we mainly analyze a particular nonlocal model that accurately approximates the following Poisson model:
\begin{equation}  \label{b01}
\begin{cases}
-\Delta_{\M} u(\textbf{x})=f(\textbf{x}) & \textbf{x} \in \M; \\
u(\textbf{x})=0 & \textbf{x} \in \partial \M.
\end{cases}
\end{equation}
Here $\M$ is a compact, smooth $m$ dimensional manifold embedded in $\mathbb{R}^d$, with $\partial \M$ a smooth $(m-1)$ dimensional curve with bounded curvature. $f$ is an $H^2$ function on $\M$.  
$\Delta_{\M}$ is the Laplace-Beltrami operator on $\M$. See \cite{YajieTrunc} in the 2nd page for the definition of $\Delta_{\M}$. It is well known that the boundary value problem \eqref{b01} has a unique solution $u \in H^4(\M)$.

Before we start to introduce our model, let us first review the existing nonlocal Poisson models in the literature.
In fact, most of the nonlocal Poisson models were analyzed in Euclid domains, among which the most commonly studied equation is
\begin{equation} \label{intro1}
\frac{1}{\delta^2} \int_{\Omega} (u_{\delta}(\textbf{x})-u_{\delta}(\textbf{y}) ) R_\delta(\textbf{x},\textbf{y}) d \textbf{y}=f(\textbf{x}), \qquad \textbf{x} \in \Omega.
\end{equation}
Here $\Omega \subset \mathbb{R}^k$ is a bounded Euclid domain with smooth boundary,  $f \in H^2(\Omega)$, $\delta$ is the nonlocal horizon parameter that describes the range of nonlocal interaction, $R_{\delta}(\textbf{x}, \textbf{y}) =C_{\delta} R \big( \frac{\vert \textbf{x}-\textbf{y} \vert^2}{4 \delta^2}  \big)$ is the nonlocal kernel function,
where $R \in C^2(\mathbb{R}^+) \cap L^1 [0, \infty)$ is a properly-chosen positive function with compact support, and  $C_{\delta}=\frac{1}{(4\pi \delta^2)^{k/2}}$ is the normalization factor. Such equation usually appeared in the discussion of peridynamics models \cite{Yunzhe4} \cite{Yunzhe8} \cite{Yunzhe13} \cite{Yunzhe27} \cite{Yunzhe31} \cite{Yunzhe32}.
For various kind of boundary conditions, efforts have been made to approximate $\Delta u=f$ with \eqref{intro1} by adding proper terms into \eqref{intro1} along the boundary layer, 
see \cite{Yunzhe5} \cite{Yunzhe6} \cite{Yunzhe12} \cite{Yunzhe14} for Neumann boundary condition and  \cite{Yunzhe2} \cite{book-nonlocal} \cite{Du-SIAM} \cite{Yunzhe25}  \cite{ZD10} for other types of boundary conditions. Those modifications yield to $\mathcal{O}(\delta)$ convergence rate from $u_{\delta}$ to $u$. 

 As a breakthrough,  in one dimensional \cite{Yunzhe} and two dimensional  \cite{Neumann_2nd_order} cases,  the nonlocal models with $\mathcal{O} (\delta^2)$ convergence rate to its local counterpart were successfully constructed under Neumann boundary condition. One year later, Lee H. and Du Q. in \cite{Leehwi} introduced a nonlocal model under Dirichlet boundary condition by imposing a special volumetric constraint along the boundary layer, which assures $\mathcal{O} (\delta^2)$ convergence rate in 1d segment and 2d plain disk.
  
In 2018, nonlocal Poisson model was first extended into manifold in \cite{Base1} under homogeneous Neumann boundary, where the following nonlocal Poisson model was constructed:
\begin{equation}  \label{base}
 \int_\M \frac{1}{\delta^2} R_{\delta}(\textbf{x},\textbf{y}) (u_{\delta}(\textbf{x})-u_{\delta}(\textbf{y})) d \mu_{\textbf{y}}=\int_{\mathcal{M}} f(\textbf{y}) \bar{R}_{\delta}(\textbf{x},\textbf{y}) d \mu_{\textbf{y}},
\end{equation}
 here $\M$ is the $m$ dimensional manifold embedded in $\mathbb{R}^d$ and $f \in H^2(\M)$, $d \mu_\textbf{y}$ is the volume form of $\M$. The kernel functions $R_{\delta}$ and $\bar{R}_{\delta}$ are defined as follows
 \begin{equation} \label{kernels12}
 R_{\delta}(\textbf{x}, \textbf{y}) =C_{\delta} R \big( \frac{\vert \textbf{x}-\textbf{y}\vert ^2}{4 \delta^2} \big),  \bar{R}_{\delta}(\textbf{x}, \textbf{y}) =C_{\delta} \bar{R} \big( \frac{\vert \textbf{x}-\textbf{y}\vert ^2}{4 \delta^2} \big), 
 \end{equation}
 where $\bar{R}(r)=\int_r^{+\infty} R(s) ds$ and $C_{\delta}=\frac{1}{(4\pi\delta^2)^{m/2}}$ is the centralization constant.
 The kernel function $R(r)$ is assumed to have the following constraints:
\begin{enumerate}
\item Smoothness: $\frac{d^2}{dr^2}R(r)$ is bounded, i.e., for any $r \geq 0$ we have $\big\vert \frac{d^2}{dr^2}R(r) \big\vert \leq C$;
\item Nonnegativity: $R(r)>0$ for any $r \geq 0$;
\item Compact support: $R(r)=0$ for any $r>1$;
\item Nondegenearcy: $\exists \ \delta_0 >0$ so that $R(r) \geq \delta_0 >0$ for $0 \leq r \leq 1/2$.
\end{enumerate}
In fact, the compact support assumption can be relaxed to exponentially decay, like Gaussian kernel. 

 The error of \eqref{base} has been rawly analyzed in \cite{Base1}.
 Such model has convergence rate $\mathcal{O}(\delta)$ to its local counterpart. See other nonlocal manifold models \cite{Base2} \cite{WangTangJun} with Dirichlet boundary, and \cite{Yjcms1} with interface. 
 The $\mathcal{O}(\delta)$ convergence rate was reached in \cite{LSS}, where the Dirichlet boundary was approximated by Robin condition.

In this work, to further raise the accuracy of approximation to \eqref{b01}, we propose the following nonlocal Poisson model:
\begin{equation} \label{c01}
\begin{cases}
\mathcal{L}_{\delta} u_{\delta}(\textbf{x}) - \mathcal{G}_{\delta} v_{\delta}(\textbf{x})
=  \mathcal{P}_{\delta} f(\textbf{x}) , &  \textbf{x} \in \M, \\
\mathcal{D}_{\delta} u_{\delta}(\textbf{x}) + \tilde{R}_{\delta}(\textbf{x}) v_{\delta}(\textbf{x}) =  \mathcal{Q}_{\delta} f(\textbf{x}),  & \textbf{x} \in \partial \M.
\end{cases}
\end{equation}

where the operators are defined as 
\begin{equation}
\mathcal{L}_{\delta} u_{\delta}(\textbf{x})=\frac{1}{ \delta^2} \int_{\M} (u_{\delta}(\textbf{x})-u_{\delta}(\textbf{y})) \ {R}_{\delta} (\textbf{x}, \textbf{y}) d \mu_\textbf{y} ,
\end{equation}
\begin{equation}
 \mathcal{G}_{\delta} v_{\delta}(\textbf{x})= \int_{\partial \M} v_{\delta} (\textbf{y}) \ (2+  (\textbf{x}-\textbf{y}) \cdot \kappa_{\n}  (\textbf{y}) \n(\textbf{y}) ) \ \bar{R}_{\delta} (\textbf{x}, \textbf{y})  d \tau_\textbf{y} ,
 \end{equation}
 \begin{equation}
\mathcal{D}_{\delta} u_{\delta}(\textbf{x}) = \int_{\M} u_{\delta} (\textbf{y}) \ (2 - \ (\textbf{x}-\textbf{y}) \cdot \kappa_{\n} (\textbf{x}) \n(\textbf{x}) )  \ \bar{R}_{\delta} (\textbf{x}, \textbf{y}) d \mu_\textbf{y},
\end{equation}
\begin{equation} \label{tilder}
\tilde{R}_{\delta}(\textbf{x}) =4 \delta^2 \int_{\partial \M}  \overset{=}{R}_{\delta} (\textbf{x}, \textbf{y}) d \tau_\textbf{y} - \int_{\M} \kappa_{\n}(\textbf{x}) \ ((\textbf{x}-\textbf{y}) \cdot n(\textbf{x}) )^2 \ \bar{R}_{\delta} (\textbf{x},\textbf{y}) d \mu_\textbf{y},
\end{equation}
\begin{equation}
\mathcal{P}_{\delta} f(\textbf{x})= \int_{\M} f(\textbf{y}) \ \bar{R}_{\delta} (\textbf{x}, \textbf{y}) d \mu_\textbf{y} -\int_{\partial \M} ((\textbf{x}-\textbf{y}) \cdot  \n(\textbf{y}) )  \ f(\textbf{y}) \ \bar{R}_{\delta} (\textbf{x}, \textbf{y}) d \tau_\textbf{y},
 \end{equation}
 \begin{equation}
 \mathcal{Q}_{\delta} f(\textbf{x})=-2\delta^2 \int_{\M} f(\textbf{y}) \ \overset{=}{R}_{\delta} (\textbf{x}, \textbf{y}) d \mu_\textbf{y}.
\end{equation}

We aim to approximate $(u, \frac{\partial u}{\partial \n})$ in \eqref{b01} by $(u_{\delta}, v_{\delta})$,
here $\n$ is the outward unit normal vector in $\partial \M$,  $\frac{\partial u}{\partial \n} =\nabla_{\M} u \cdot \n$,
$\kappa_{\n}$ is the constant defined in the lemma 3.3 of \cite{YajieTrunc}. 
$d \tau_\textbf{y}$ is the volume form of  $\partial \M$.
$R$ and $\bar{R}$ are the kernel functions in \eqref{base}, with the same constraints on $R$ assumed. The third kernel function $\overset{=}{R}_{\delta}(\textbf{x}, \textbf{y}) =C_{\delta} \overset{=}{R} \big( \frac{\vert \textbf{x}-\textbf{y}\vert ^2}{4 \delta^2} \big)$, where 
\begin{equation} \label{kernels3}
\overset{=}{R}(r)=\int_r^{+\infty} \bar{R}(s)ds.
\end{equation}
In general, the first equation of \eqref{c01} is an optimization of \eqref{base}  by adding some high order terms along the inner $2\delta-$layer of the boundary $\partial \M$. The second equation of \eqref{c01} is the volumetric version of Dirichlet boundary condition, where the operator $\mathcal{D}_{\delta}$ is constructed accordingly with $\mathcal{G}_{\delta}$. The idea of construction of \eqref{c01} and its truncation error analysis are presented in \cite{YajieTrunc}. 

Our purpose in this paper is to analyze the well-posedness of the model \eqref{c01}, its second order convergence to its local counterpart, and the numerical simulation of model by point integral method(PIM, see \cite{LSS} \cite{weightedLaplacian} ) that illustrates such convergence rate. Our analytic results can be easily generalized into the case with non-homogeneous Dirichlet boundary condition. To the author's best knowledge, even in the Euclid spaces, no work has ever appeared on the construction of nonlocal Poisson model with second order convergence under dimension $d \geq 3$, having such model will result in higher efficiency in the numerical implementation. In addition, as it is almost impossible to construct a mesh for high dimensional manifold, the PIM brings much more convenience than manifold finite element method(FEM). In comparison to the other meshfree numerical methods \cite{DongGuoZhi} \cite{GMLS1} \cite{KBM} \cite{localkernel} \cite{GBPF} \cite{DiffMap} on solving manifold PDEs, PIM can be easily applied to high dimensional manifold.

The paper is organized as follows: we first state our main results in section 2. Next, we describe the properties of the bilinear form corresponding to the nonlocal equations in section 3. In section 4, we analyze the well-posedness of model. The convergence of our model to \eqref{b01} is presented in section 5. In section 6, we simulate our model by point cloud method to realize such convergence rate. Finally, discussion and conclusion is included in section 7.

\section{Main Results}
Our goal in this work is to prove the following 2 theorems.
\begin{theorem}[Well-Posedness] \label{theorem3}
\begin{enumerate}
\item
For each fixed $\delta>0$ and $f \in H^1(\M) $, there exists a unique solution $u_{\delta} \in L^2(\M)$, $v_{\delta} \in L^2(\partial \M)$ to the nonlocal model $\eqref{c01}$, with the following estimate
\begin{equation} \label{wellposed1}
\left \lVert u_{\delta} \right \rVert^2_{L^2(\M)} + \delta \left \lVert v_{\delta} \right \rVert^2_{L^2(\partial \M)} \leq C \left \lVert f \right \rVert^2_{H^1(\M)}  .
\end{equation}
\item In addition, we have $u_{\delta} \in H^1(\M)$ as well, with
\begin{equation} \label{wellposed2}
\left \lVert u_{\delta} \right \rVert^2_{H^1(\M)} \leq C   \left \lVert f \right \rVert^2_{H^1(\M)} .
\end{equation}
Here the constant $C$ in the above inequalities are independent on $\delta$.
\end{enumerate}
\end{theorem}

\begin{theorem}[Vanishing Nonlocality]   \label{theorem2} 
Let $f \in H^2(\M)$, $u$ be the solution to the  Poisson model \eqref{b01}, and $(u_\delta, v_{\delta})$ be the solution to the nonlocal model \eqref{c01}, then we have the following estimate
\begin{equation}   \label{d06}
  \left \lVert u-u_\delta \right \rVert_{H^1(\M)}  + \delta^{1/2} \left \lVert \frac{\partial u}{\partial  \n} -v_\delta \right \rVert_{L^2(\partial 
 \M)} \leq C \delta^2 \left \lVert f \right \rVert_{H^2(\M)}, 
\end{equation}
where the constant is independent to $\delta$.
\end{theorem}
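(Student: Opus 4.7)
The strategy is the standard ``consistency + stability'' pipeline, executed in the $H^1$--weighted framework already developed for Theorem \ref{theorem3}. First I would show that the exact pair $(u,\partial u/\partial\n)$ is an approximate solution of the nonlocal system \eqref{c01}: plugging it in and using the improved LBO approximation \eqref{b0016} together with the geometric identity \eqref{dd2}, the first equation leaves a residual
\begin{equation}
r_{\mathrm{in}}(\x)=\mathcal{L}_{\delta}u(\x)-\mathcal{G}_{\delta}(\partial u/\partial\n)(\x)-\mathcal{P}_{\delta}f(\x),\qquad \x\in\M,
\end{equation}
while a Taylor expansion of $u$ near $\partial\M$ in the normal direction (the same expansion that motivated the second equation of \eqref{c01}, with the coefficient of $v_\delta$ chosen precisely so that the leading terms cancel) produces a boundary residual
\begin{equation}
r_{\mathrm{bd}}(\x)=\mathcal{D}_{\delta}u(\x)+\tilde R_{\delta}(\x)(\partial u/\partial\n)(\x)-\mathcal{Q}_{\delta}f(\x),\qquad \x\in\partial\M.
\end{equation}
These will be controlled by the truncation analysis promised in Section \ref{error1}: $r_{\mathrm{in}}$ is $\mathcal{O}(\delta^2)$ in the interior region by the symmetric cancellation already noted for \eqref{base000}, and $\mathcal{O}(\delta^2)$ in the $2\delta$-boundary layer thanks to the extra curvature correction; $r_{\mathrm{bd}}$ is even better, $\mathcal{O}(\delta^3)$ because it was designed to absorb the next Taylor term.

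Next I would form the error pair $e_\delta=u-u_\delta$, $w_\delta=\partial u/\partial\n-v_\delta$, which satisfies the linear system
\begin{equation}
\begin{cases}
\mathcal{L}_{\delta}e_{\delta}(\x)-\mathcal{G}_{\delta}w_{\delta}(\x)=r_{\mathrm{in}}(\x), & \x\in\M,\\
\mathcal{D}_{\delta}e_{\delta}(\x)+\tilde R_{\delta}(\x)w_{\delta}(\x)=r_{\mathrm{bd}}(\x), & \x\in\partial\M.
\end{cases}
\end{equation}
Applying the stability machinery already built for Theorem \ref{theorem3} (coercivity of the associated bilinear form and the $H^1$-bootstrap in part 2 of that theorem) to this error system reduces everything to estimating the residuals in the norm dual to the one in which the stability estimate is stated. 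In view of the template $\|u_\delta\|_{H^1(\M)}^2+\delta\|v_\delta\|_{L^2(\partial\M)}^2\lesssim\|f\|_{H^1(\M)}^2$, the right-hand side should appear through test-function pairings of the form $\int_{\M}r_{\mathrm{in}}\varphi\,d\mu+\int_{\partial\M}r_{\mathrm{bd}}\psi\,d\tau$ with $(\varphi,\psi)$ playing the role of $(e_\delta,w_\delta)$, so that what I actually need is
\begin{equation}
\Bigl|\int_{\M}r_{\mathrm{in}}\varphi\,d\mu\Bigr|+\Bigl|\int_{\partial\M}r_{\mathrm{bd}}\psi\,d\tau\Bigr|\le C\delta^{2}\|f\|_{H^{2}(\M)}\bigl(\|\varphi\|_{H^{1}(\M)}+\delta^{1/2}\|\psi\|_{L^{2}(\partial\M)}\bigr).
\end{equation}

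To obtain such test-function estimates I would split each residual into a ``symmetric'' part and a ``nonsymmetric'' part. The symmetric pieces, which are responsible for the improvement from $\mathcal{O}(\delta)$ to $\mathcal{O}(\delta^2)$, are handled by an integration-by-parts/antisymmetry trick on the kernel: after pairing with $\varphi$ one swaps $\x$ and $\y$ and averages, transferring a derivative onto $\varphi$ and gaining one extra power of $\delta$, which is where the $H^1$ norm of the test function naturally appears. The nonsymmetric pieces, supported in the boundary layer of width $\mathcal{O}(\delta)$, are bounded directly by Cauchy--Schwarz using the $H^4$ regularity of $u$ from \eqref{b01} together with a standard trace inequality (and the factor $\delta^{1/2}$ from the thin support of the layer is exactly what couples with the $\delta^{1/2}$ weight on the boundary test function). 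Once both pairings are in hand, Theorem \ref{theorem3} delivers \eqref{d06}.

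The main obstacle is this last step: producing the antisymmetric splitting of $r_{\mathrm{in}}$ so that the interior and boundary-layer contributions can both be absorbed into $\|\varphi\|_{H^1(\M)}+\delta^{1/2}\|\psi\|_{L^2(\partial\M)}$ with a universal $\delta^{2}$ in front. The interior symmetrization is by now standard, but on the boundary layer one has to combine the curvature term in \eqref{b0016}, the identity \eqref{dd2}, and the precise coefficient of $\mathcal{G}_\delta$ and $\mathcal{D}_\delta$ to witness the cancellation; verifying that these pieces indeed align, and that the residual genuinely has the regularity required for the trace estimate (which is why the hypothesis was strengthened from $H^1$ in Theorem \ref{theorem3} to $H^2$ here), is where most of the technical work will lie.
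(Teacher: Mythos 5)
Your outline matches the paper's proof skeleton almost exactly: form the error pair $(e_\delta,e^\n_\delta)=(u-u_\delta,\partial u/\partial\n-v_\delta)$, observe it solves the linear system with right-hand sides $(r_{in},r_{bd})$, and feed the truncation estimates of Theorem \ref{Nonlocal_Model} into the stability estimate of Lemma \ref{lemma1}(ii). The weak-form pairing $\bigl|\int_\M r_{in}\varphi\,d\mu\bigr|\lesssim\delta^2\|\varphi\|_{H^1}\|f\|_{H^2}$ you write down is in spirit the paper's hypothesis \eqref{assum2}, except the paper allows $\|\bar{\varphi}\|_{H^1}+\|\overset{=}{\varphi}\|_{H^1}$ on the right as well; those weighted averages are precisely where the transferred derivative lands (they are smooth mollifications, later controlled by $B_\delta$), and that refinement is load-bearing rather than cosmetic.

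Two concrete inaccuracies are worth flagging because they point at a genuine misunderstanding of what is gaining the extra half power of $\delta$. First, your claimed residual orders are off: the boundary-layer part of $r_{in}$ is only $\mathcal{O}(\delta^{3/2})$ in $L^2(\M)$ (not $\mathcal{O}(\delta^2)$), and $r_{bd}$ is $\mathcal{O}(\delta^{5/2})$ in $L^2(\partial\M)$ (not $\mathcal{O}(\delta^3)$); with the $\delta$-weights of Lemma \ref{lemma1} these are exactly the borderline rates, so the jump from $\delta^{3/2}$ to $\delta^2$ for $r_{bl}$ must come entirely from the pairing estimate \eqref{symme}, not from a thin-layer Cauchy--Schwarz bound. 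Second, the ``swap $\x$ and $\y$ and average'' kernel symmetrization you propose is the standard interior argument; the boundary-layer pieces of $r_{in}$ involve surface integrals $\int_{\partial\M}(\cdot)\,d\tau_\y$ paired against $\int_\M(\cdot)\,d\mu_\x$, where there is no $\x\leftrightarrow\y$ symmetry to exploit. The paper instead integrates by parts on the parametric domain $\Omega$ (Lemma \ref{baselemma} and the appendix), which is what produces both the extra $\delta$ and the weighted-average test functions in \eqref{symme}. You correctly identify the boundary-layer splitting as the main obstacle, but the mechanism you sketch would not carry it; the parametric integration by parts is the missing idea.
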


This two theorem indicates that \eqref{c01} assures a unique solution $u_{\delta}$ and has localization rate $\mathcal{O}(\delta^2)$ to \eqref{b01} in $H^1$ norm.
Such rate attains more accuracy than the model introduced in \cite{Base1} and is currently optimal among all the  high dimensional nonlocal models even in the case of Euclid domains.
 In the following section, some coercivity properties of the model \eqref{c01} will be given. The proof of theorem \ref{theorem3} and \ref{theorem2} will then be given separately in section \ref{main1} and \ref{main2}.

 \section{Bilinear Form of Model}

Let the functions $m_{\delta}, p_{\delta} \in L^2(\M)$,  $n_{\delta},q_{\delta} \in L^2(\partial \M)$ and satisfy the equations
\begin{equation} \label{c16}
\begin{cases}
\mathcal{L}_{\delta} m_{\delta}(\textbf{x}) - \mathcal{G}_{\delta} n_{\delta}(\textbf{x})
=  p_{\delta}(\textbf{x}) , &  \textbf{x} \in \M, \\
\mathcal{D}_{\delta} m_{\delta}(\textbf{x}) + \tilde{R}_{\delta}(\textbf{x}) n_{\delta}(\textbf{x}) = q_{\delta}(\textbf{x}),  & \textbf{x} \in \partial \M,
\end{cases}
\end{equation}
In this section, we aim to find some relations between the functions $(m_\delta, n_\delta)$ and $(p_\delta, q_\delta)$, to be the lemmas that helps to prove theorem \ref{theorem3} and \ref{theorem2}.

To begin with, for any $w_{\delta} \in L^2(\M)$, $s_{\delta}  \in L^2(\partial \M)$, we define the following bilinear function:
\begin{equation} \label{c02}
\begin{split}
& B_{\delta}[m_{\delta}, n_{\delta}; w_{\delta}, s_{\delta}] = \int_{\M} w_{\delta}(\textbf{x}) (\mathcal{L}_{\delta} m_{\delta}(\textbf{x}) - \mathcal{G}_{\delta} n_{\delta}(\textbf{x})) d \mu_\textbf{x}  \\
 & + \int_{\partial \M} s_{\delta}(\textbf{x}) (\mathcal{D}_{\delta} m_{\delta}(\textbf{x}) +  \tilde{R}_{\delta}(\textbf{x}) n_{\delta}(\textbf{x})) d \tau_\textbf{x}  =  \int_{\M} w_{\delta}(\textbf{x}) \mathcal{L}_{\delta} m_{\delta}(\textbf{x}) d \mu_\textbf{x} 
 \\ & 
 -\int_{\M} w_{\delta}(\textbf{x}) \mathcal{G}_{\delta} n_{\delta}(\textbf{x}) d \mu_\textbf{x}  
+\int_{\partial \M} s_{\delta} (\textbf{x})  \mathcal{D}_{\delta} m_{\delta}(\textbf{x}) d \tau_\textbf{x}+  \int_{\partial \M}   s_{\delta} (\textbf{x})  n_{\delta}(\textbf{x}) \tilde{R}_{\delta}(\textbf{x})  d \tau_\textbf{x},
\end{split}
\end{equation}
and the weak formulation of the equation $\eqref{c16}$ :
\begin{equation} \label{c18}
\begin{split}
B_{\delta}[m_{\delta}, n_{\delta}; w_{\delta}, s_{\delta}] =\int_{\M} w_{\delta}(\textbf{x}) p_\delta(\textbf{x})  d \mu_\textbf{x} + \int_{\partial \M} s_{\delta} (\textbf{x})  q_\delta(\textbf{x}) d \tau_\textbf{x}, \\
 \forall \  w_{\delta} \in L^2(\M), \ s_{\delta}  \in L^2(\partial \M).
 \end{split}
\end{equation}

Since $w_\delta$ and $s_\delta$ are arbitrary $L^2$ functions, it is clear that the weak formulation \eqref{c18} is equivalent to the nonlocal model \eqref{c16}. We then write down two auxiliary lemmas for $B_{\delta}$.

\begin{lemma}[Non-Negativity of the Bilinear Form] \label{theorem1}
we have
\begin{equation} \label{Bdelta}
B_{\delta}[m_{\delta}, n_{\delta};  m_{\delta}, n_{\delta}]= \frac{1}{2 \delta^2} \int_{\M}  \int_{\M}  (m_{\delta}(\textbf{x})-m_{\delta}(\textbf{y}))^2 \ {R}_{\delta} (\textbf{x}, \textbf{y}) d \mu_\textbf{x} d \mu_\textbf{y} +   \int_{\partial \M}  n^2_{\delta}(\textbf{x}) \tilde{R}_{\delta}(\textbf{x})  d \tau_\textbf{x}.
\end{equation}

\end{lemma}

\begin{lemma} \label{lemmam}
We define the weighted average functions of $m_{\delta}$ in $\M$:
\begin{equation}
\bar{m}_{\delta}(\textbf{x})= \frac{1}{\bar{\omega}_{\delta}(\textbf{x})} \int_{\M} m_{\delta} (\textbf{y}) \bar{R}_{\delta} (\textbf{x}, \textbf{y}) d \mu_\textbf{y}, \qquad \hat{m}_{\delta}(\textbf{x})=\frac{1}{\omega_{\delta}(\textbf{x})} \int_{\M} m_{\delta}(\textbf{y}) R_{\delta}(\textbf{x},\textbf{y})d \mu_\textbf{y}, 
\end{equation}
where $ \omega_{\delta}(\textbf{x})=\int_{\M} R_{\delta}(\textbf{x},\textbf{y})d \mu_\textbf{y} , \ \bar{\omega}_{\delta}(\textbf{x})=\int_{\M} \bar{R}_{\delta}(\textbf{x},\textbf{y})d \mu_\textbf{y} $,

then
\begin{equation} \label{c12}
\frac{1}{2 \delta^2} \int_{\M}  \int_{\M}  (m_{\delta}(\textbf{x})-m_{\delta}(\textbf{y}))^2 \ {R}_{\delta} (\textbf{x}, \textbf{y}) d \mu_\textbf{x} d \mu_\textbf{y} \geq 
C \left \lVert \nabla \hat{m}_{\delta} \right \rVert^2_{L^2(\M)},
\end{equation}
\begin{equation} \label{c51}
\frac{1}{2 \delta^2} \int_{\M}  \int_{\M}  (m_{\delta}(\textbf{x})-m_{\delta}(\textbf{y}))^2 \ {R}_{\delta} (\textbf{x}, \textbf{y}) d \mu_\textbf{x} d \mu_\textbf{y} \geq 
C \left \lVert \nabla \bar{m}_{\delta} \right \rVert^2_{L^2(\M)},
\end{equation}
where the constant $C$ is independent to $\delta$ and $m_{\delta}$.
\end{lemma}

\begin{proof}[Proof of Lemma \ref{theorem1}]

We calculate each term of the bilinear form in \eqref{c02} after substituting $(w_{\delta}, s_{\delta})$ by $(m_{\delta}, n_{\delta})$:

\begin{equation} \label{c03}
\begin{split}
 \int_{\M} m_{\delta}(\textbf{x}) \mathcal{L}_{\delta} & m_{\delta}(\textbf{x}) d \mu_\textbf{x}  = 
\frac{1}{ \delta^2} \int_{\M} m_{\delta}(\textbf{x}) \int_{\M} (m_{\delta}(\textbf{x})-m_{\delta}(\textbf{y})) \ {R}_{\delta} (\textbf{x}, \textbf{y}) d \mu_\textbf{y} d \mu_\textbf{x} \\
& =\frac{1}{ \delta^2} \int_{\M} m_{\delta}(\textbf{y}) \int_{\M} (m_{\delta}(\textbf{y})-m_{\delta}(\textbf{x})) \ {R}_{\delta} (\textbf{x}, \textbf{y}) d \mu_\textbf{x} d \mu_\textbf{y} \\
& =\frac{1}{2 \delta^2} \int_{\M}  \int_{\M} (m_{\delta}(\textbf{x})-m_{\delta}(\textbf{y})) (m_{\delta}(\textbf{x})-m_{\delta}(\textbf{y})) \ {R}_{\delta} (\textbf{x}, \textbf{y}) d \mu_\textbf{x} d \mu_\textbf{y} ,
\end{split}
\end{equation}

\begin{equation} \label{c04}
\begin{split}
 \int_{\M} m_{\delta}(\textbf{x}) \mathcal{G}_{\delta} n_{\delta}(\textbf{x}) d \mu_\textbf{x} & =
 \int_{\M} m_{\delta}(\textbf{x}) \int_{\partial \M} n_{\delta}(\textbf{y}) \ (2+ \kappa(\textbf{y}) \ (\textbf{x}-\textbf{y}) \cdot \n(\textbf{y})  ) \ \bar{R}_{\delta} (\textbf{x}, \textbf{y})  d \tau_\textbf{y}  d \mu_\textbf{x} \\
& =\int_{\M} m_{\delta}(\textbf{y}) \int_{\partial \M}  \ n_{\delta}(\textbf{x}) \ (2-\kappa(\textbf{x}) \ (\textbf{x}-\textbf{y}) \cdot \n (\textbf{x}) ) \ \bar{R}_{\delta} (\textbf{x}, \textbf{y})  d \tau_\textbf{x}  d \mu_\textbf{y} \\
& =\int_{\partial \M}   n_{\delta}(\textbf{x}) \ \int_{ \M}  m_{\delta}(\textbf{y})  \ (2-\kappa(\textbf{x}) \ (\textbf{x}-\textbf{y}) \cdot \n(\textbf{x})  ) \ \bar{R}_{\delta} (\textbf{x}, \textbf{y})  d \mu_\textbf{y}  d \tau_\textbf{x}  \\
& =\int_{\partial \M} n_{\delta} (\textbf{x})  \mathcal{D}_{\delta} m_{\delta}(\textbf{x}) d \tau_\textbf{x},
\end{split}
\end{equation}

the above equation \eqref{c03} and \eqref{c04} gives
\begin{equation}
B_{\delta}[m_{\delta}, n_{\delta};  m_{\delta}, n_{\delta}]= \frac{1}{2 \delta^2} \int_{\M}  \int_{\M}  (m_{\delta}(\textbf{x})-m_{\delta}(\textbf{y}))^2 \ {R}_{\delta} (\textbf{x}, \textbf{y}) d \mu_\textbf{x} d \mu_\textbf{y} +   \int_{\partial \M}  n^2_{\delta}(\textbf{x}) \tilde{R}_{\delta}(\textbf{x})  d \tau_\textbf{x},
\end{equation}
where the cross terms are eliminated by each other.

\end{proof}

\begin{proof}[Proof of lemma \ref{lemmam}]
The first inequality is from the theorem 7 of \cite{Base1}, where the assumption on the kernel function $R$ in page 3 is utilized. For the second inequality, we apply the lemma 3 of  \cite{Base1}:
\begin{equation}
\begin{split}
 \frac{1}{2 \delta^2} \int_{\M}  \int_{\M}  (m_{\delta}(\textbf{x})-m_{\delta}(\textbf{y}))^2 \ {R}(\frac{\vert \textbf{x}-\textbf{y} \vert^2 }{4 \delta^2}) d \mu_\textbf{x} d \mu_\textbf{y}  \\
 \geq  \frac{C}{2 \delta^2} \int_{\M}  \int_{\M}  (m_{\delta}(\textbf{x})-m_{\delta}(\textbf{y}))^2 \ {R}(\frac{\vert \textbf{x}-\textbf{y} \vert^2 }{32 \delta^2}) d \mu_\textbf{x} d \mu_\textbf{y} ,
 \end{split}
 \end{equation}
hence
\begin{equation} 
\begin{split}
 \frac{1}{2 \delta^2} \int_{\M}  \int_{\M} & (m_{\delta}(\textbf{x})-m_{\delta}(\textbf{y}))^2 \ {R}_{\delta} (\textbf{x}, \textbf{y}) d \mu_\textbf{x} d \mu_\textbf{y} \\
 = &  \frac{C_\delta}{2 \delta^2} \int_{\M}  \int_{\M}  (m_{\delta}(\textbf{x})-m_{\delta}(\textbf{y}))^2 \ {R}(\frac{\vert \textbf{x}-\textbf{y} \vert^2 }{4 \delta^2}) d \mu_\textbf{x} d \mu_\textbf{y} \\
  \geq & \frac{C \ C_\delta}{2 \delta^2} \int_{\M}  \int_{\M}  (m_{\delta}(\textbf{x})-m_{\delta}(\textbf{y}))^2 \ {R}(\frac{\vert \textbf{x}-\textbf{y} \vert^2 }{32 \delta^2}) d \mu_\textbf{x} d \mu_\textbf{y} \\
 \geq & \frac{C \ C_\delta}{2 \delta^2} \int_{\vert\textbf{y}-\textbf{x}\vert \leq \delta}  \int_{\M}  (m_{\delta}(\textbf{x})-m_{\delta}(\textbf{y}))^2 {R}(\frac{\vert \textbf{x}-\textbf{y} \vert^2 }{32 \delta^2})   d \mu_\textbf{x} d \mu_\textbf{y} \\ 
 \geq & \frac{C  \ \delta_0}{2 \delta^m} \frac{C_\delta}{\delta^2} \int_{\vert\textbf{y}-\textbf{x}\vert \leq \delta}  \int_{\M}  (m_{\delta}(\textbf{x})-m_{\delta}(\textbf{y}))^2 d \mu_\textbf{x} d \mu_\textbf{y}  \\
 \geq  & \frac{C}{2 \delta^2} \int_{\M}  \int_{\M}  (m_{\delta}(\textbf{x})-m_{\delta}(\textbf{y}))^2 \ {\bar{R}}_{\delta} (\textbf{x}, \textbf{y}) d \mu_\textbf{x} d \mu_\textbf{y} 
 \geq   C \left \lVert \nabla \bar{m}_{\delta} \right \rVert^2_{L^2(\M)},
 \end{split}
\end{equation}
where the last inequality is a direct corollary of \eqref{c12}.
\end{proof}

Next, we state the main lemma in this section.
\begin{lemma}[Regularity] \label{lemma1}
For any functions $m_{\delta}, p_{\delta} \in L^2(\M)$, and $n_{\delta},q_{\delta} \in L^2(\partial \M)$ that satisfy the system of equations \eqref{c16},
\begin{enumerate}
\item
there exists a constant $C$ independent to $\delta$ such that
\begin{equation} \label{c50}
  B_{\delta}[m_{\delta}, n_{\delta}; m_{\delta}, n_{\delta}] + \frac{1}{ \delta }  \left \lVert q_{\delta} \right \rVert^2_{L^2(\partial \M)} \geq  C( \left \lVert m_{\delta} \right \rVert_{L^2(\M)}^2 + \delta \left \lVert n_{\delta} \right \rVert_{L^2(\partial \M)}^2);
\end{equation}
\item  If in addition, $p_\delta$ satisfies the following conditions
\begin{enumerate}
\item
\begin{equation} \label{assum1}
\left \lVert \nabla p_{\delta} \right \rVert_{L^2(\M)}+ \frac{1}{\delta} \left \lVert  p_{\delta} \right \rVert_{L^2(\M)} \leq F(\delta) \left \lVert  p_0 \right \rVert_{H^\beta(\M)},
\end{equation}
\item
\begin{equation} \label{assum2}
\int_{\M}  p_{\delta}(\textbf{x}) \ f_1(\textbf{x}) d \mu_\textbf{x}  \leq G(\delta) ( \left \lVert  f_1 \right \rVert_{H^1(\M)} + \left \lVert  \bar{f}_1 \right \rVert_{H^1(\M)} + \left \lVert  \overset{=}{f}_1 \right \rVert_{H^1(\M)} ) \left \lVert  p_0 \right \rVert_{H^\beta(\M)},  
\end{equation}
for all function $f_1 \in H^1(\M)$ and some function $p_0 \in H^\beta(\M)$ and some constant $F(\delta)$, $G(\delta)$ depend on $\delta$, with the notations
\begin{equation}
\bar{f}_1(\textbf{x})= \frac{1}{\bar{\omega}_{\delta}(\textbf{x})} \int_{\M} f_1 (\textbf{y}) \bar{R}_{\delta} (\textbf{x}, \textbf{y}) d \mu_\textbf{y}, \qquad \overset{=}{f}_1(\textbf{x})=\frac{1}{\overset{=}{\omega}_{\delta}(\textbf{x})} \int_{\M} f_1(\textbf{x}) \overset{=}{R}_{\delta}(\textbf{x},\textbf{y})d \mu_\textbf{y},
\end{equation}
and $$\bar{\omega}_{\delta}(\textbf{x})=\int_{\M} \bar{R}_{\delta}(\textbf{x}, \textbf{y}) d \mu_\textbf{y}, \ \overset{=}{\omega}_{\delta}(\textbf{x})=\int_{\M} \overset{=}{R}_{\delta}(\textbf{x},\textbf{y})d \mu_\textbf{y}, \qquad \forall \ \textbf{x} \in \M, $$
\end{enumerate}
then we will have $m_{\delta} \in H^1(\M)$, with the estimate
\begin{equation} \label{c55}
\left \lVert m_{\delta} \right \rVert_{H^1(\M)}^2 +  \delta \left \lVert n_{\delta} \right \rVert_{L^2(\partial \M)}^2  \leq C \ \big( (  G^2(\delta) + \delta^4 F^2(\delta)) \left \lVert  p_0 \right \rVert^2_{H^\beta(\M)}  + \frac{1}{\delta} \left \lVert q_{\delta} \right \rVert^2_{L^2(\partial \M)} \big).
\end{equation}
\end{enumerate}
\end{lemma}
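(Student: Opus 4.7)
The plan is to combine Lemma \ref{theorem1} with a nonlocal Poincar\'e-type inequality and the boundary equation of \eqref{c16} to obtain the $L^2$ estimates of part (1), and then to refine this to an $H^1$ estimate in part (2) by exploiting a representation of $m_\delta$ afforded by the interior equation.

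For part (1), the starting point is the identity of Lemma \ref{theorem1} which splits $B_\delta[m_\delta, n_\delta; m_\delta, n_\delta]$ as the sum of a nonlocal Dirichlet energy in $m_\delta$ and the boundary piece $\int_{\partial \M} n_\delta^2 \tilde R_\delta d\tau$. A preliminary check on the definition \eqref{tilder} shows that on $\partial \M$ the dominant term $4\delta^2 \int_{\partial \M} \overset{=}{R}_\delta d\tau_\y$ is bounded below by $c\delta$ using the nondegeneracy of $R$ and the smoothness of $\partial \M$, while the curvature-weighted volume piece is only $\mathcal{O}(\delta^2)$ and is absorbed. This gives $\delta \left\lVert n_\delta \right\rVert_{L^2(\partial \M)}^2 \leq C B_\delta[m_\delta, n_\delta; m_\delta, n_\delta]$ at once. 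To control $\left\lVert m_\delta \right\rVert_{L^2(\M)}^2$, I would apply a nonlocal Poincar\'e inequality of the form
\begin{equation*}
\left\lVert m_\delta \right\rVert_{L^2(\M)}^2 \leq C \cdot \frac{1}{\delta^2} \int_\M \int_\M (m_\delta(\x) - m_\delta(\y))^2 R_\delta(\x,\y) d\mu_\x d\mu_\y + C \left\lVert \int_\M m_\delta(\y) \bar R_\delta(\cdot,\y) d\mu_\y \right\rVert_{L^2(\partial \M)}^2,
\end{equation*}
which is the Dirichlet analogue of the Poincar\'e argument used in \cite{Base2}. The right-hand boundary trace is then bounded through the boundary equation: because $\mathcal{D}_\delta m_\delta(\x)$ equals twice that trace modulo a curvature correction of order $\delta$, we obtain $\left\lVert \mathcal{D}_\delta m_\delta \right\rVert_{L^2(\partial \M)}^2 \leq 2 \left\lVert q_\delta \right\rVert^2 + 2 \left\lVert \tilde R_\delta n_\delta \right\rVert^2 \leq C \left\lVert q_\delta \right\rVert^2 + C \delta \cdot \bigl(\delta \left\lVert n_\delta \right\rVert^2\bigr)$, and the last factor is itself bounded by $B_\delta$. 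Combining these estimates and absorbing the curvature correction yields \eqref{c50}.

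For part (2), I first test against $(w_\delta, s_\delta) = (m_\delta, n_\delta)$ in the weak form \eqref{c18}. Using Lemma \ref{theorem1} on the left, bounding the right-hand side via hypothesis \eqref{assum2} with $f_1 = m_\delta$ (noting that $\left\lVert \bar m_\delta \right\rVert_{H^1}$ and $\left\lVert \overset{=}{m}_\delta \right\rVert_{H^1}$ are controlled by $\left\lVert m_\delta \right\rVert_{H^1}$ via standard mollifier smoothing), and estimating $\int n_\delta q_\delta \leq \epsilon \delta \left\lVert n_\delta \right\rVert^2 + (\epsilon \delta)^{-1} \left\lVert q_\delta \right\rVert^2$, produces a bound of the form $B_\delta \leq G(\delta) \left\lVert p_0 \right\rVert_{H^\beta} \left\lVert m_\delta \right\rVert_{H^1} + \tfrac12 \delta \left\lVert n_\delta \right\rVert^2 + C \delta^{-1}\left\lVert q_\delta \right\rVert^2$. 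To upgrade the nonlocal energy on the left to a genuine $\left\lVert \nabla m_\delta \right\rVert_{L^2}^2$ bound, the key step is to rewrite the interior equation as
\begin{equation*}
m_\delta(\x) = \frac{1}{\omega_\delta(\x)} \int_\M m_\delta(\y) R_\delta(\x,\y) d\mu_\y + \frac{\delta^2}{\omega_\delta(\x)} \bigl( \mathcal{G}_\delta n_\delta(\x) + p_\delta(\x) \bigr),
\end{equation*}
valid where $\omega_\delta(\x) = \int_\M R_\delta(\x,\y) d\mu_\y$ is bounded below, then to differentiate in $\x$ and use $\nabla_{\x} R_\delta \sim \delta^{-2}(\x-\y) R_\delta$ to recover an expression whose $L^2$ norm is controlled by the nonlocal energy together with $\delta^2 \left\lVert \nabla p_\delta \right\rVert^2$ and $\delta^4 \left\lVert \nabla \mathcal{G}_\delta n_\delta \right\rVert^2$. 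Hypothesis \eqref{assum1} supplies the $\delta^4 F^2(\delta) \left\lVert p_0 \right\rVert_{H^\beta}^2$ contribution, and the $\nabla \mathcal{G}_\delta n_\delta$ term, after moving $\nabla$ onto the smooth kernel $\bar R_\delta$ and integrating by parts on $\partial \M$, reduces to a factor of $\delta \left\lVert n_\delta \right\rVert_{L^2(\partial \M)}^2$ that is absorbable by Young's inequality. The boundary layer, where $\omega_\delta$ degenerates, is treated separately by combining part (1) with the boundary equation, and assembling all pieces produces \eqref{c55}.

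The main obstacle I anticipate is the nonlocal Poincar\'e step in part (1): standard Poincar\'e requires a zero-mean or zero-trace condition, whereas here the Dirichlet data enters only implicitly through $q_\delta$ and $n_\delta$, so the boundary-layer quantity $\left\lVert \mathcal{D}_\delta m_\delta \right\rVert_{L^2(\partial \M)}$ must be shown to control the full $L^2(\M)$ norm of $m_\delta$ rather than merely the portion within the $2\delta$-neighborhood of $\partial \M$. This will require a chaining argument propagating $L^2$ information from the boundary layer into the interior via the nonlocal energy. A secondary technical difficulty is the degeneracy of $\omega_\delta$ near $\partial \M$ in the representation formula used for the $H^1$ bound, which forces a separate treatment of the gradient in the boundary layer, handled by combining part (1) with the boundary equation and the trace-type bound on $\mathcal{G}_\delta n_\delta$.
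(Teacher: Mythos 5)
Your proposal follows the same overall architecture as the paper's proof: Lemma \ref{theorem1} to split $B_\delta$ into a nonlocal Dirichlet energy plus the $\tilde R_\delta$-weighted boundary term, the lower bound $\tilde R_\delta \gtrsim \delta$, a Poincar\'e argument that converts the Dirichlet energy into an $L^2$ bound with a boundary trace correction, use of the boundary equation to control that trace via $q_\delta$ and $n_\delta$, and for part (2), testing the weak form with $(m_\delta,n_\delta)$, invoking \eqref{assum2} with $f_1=m_\delta$, and rewriting the interior equation as $m_\delta - \hat m_\delta = \delta^2 \omega_\delta^{-1}(\mathcal G_\delta n_\delta + p_\delta)$ to extract the gradient bound. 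These are exactly the steps the paper takes (the paper organizes part (1) as five explicit sub-inequalities and bounds $\|\nabla\mathcal G_\delta n_\delta\|$ directly at rate $\delta^{-3/2}\|n_\delta\|_{L^2(\partial\M)}$ in \eqref{c15}, rather than integrating by parts on $\partial\M$ as you suggest, but both give the same $\delta^{1/2}\|n_\delta\|$ contribution once the $\delta^2$ prefactor is applied).

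The one point where your reading of the problem diverges from the paper is your anticipated ``secondary technical difficulty'': you worry that $\omega_\delta(\x) = \int_\M R_\delta(\x,\y)\,d\mu_\y$ degenerates near $\partial\M$ and so a separate boundary-layer treatment of the gradient is needed. This does not occur. The nondegeneracy assumption on the kernel, $R(r)\geq\delta_0>0$ for $0\leq r\leq 1/2$, together with the fact that a point within $2\delta$ of $\partial\M$ still sees at least a fixed fraction of a $\sqrt{2}\,\delta$-ball inside $\M$, gives the two-sided bound $C_1 \leq \omega_\delta(\x)\leq C_2$ uniformly over all of $\M$ including the boundary layer. The paper invokes this explicitly in \eqref{c14}. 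Thus the representation formula for $m_\delta - \hat m_\delta$ and its differentiated form are valid uniformly, and no separate boundary-layer argument is needed for the $H^1$ estimate; the estimate \eqref{c15} (and the analogous $\|\nabla(p_\delta/\omega_\delta)\|$ bound \eqref{c14}) is carried out over all of $\M$ in one shot. Your ``main obstacle'' --- propagating $L^2$ information from the boundary layer inward --- is handled in the paper by applying the standard Poincar\'e inequality to the smoothed function $\bar m_\delta \in H^1(\M)$ (which has both gradient and trace controlled), then using the Dirichlet energy to compare $\bar m_\delta$ with $m_\delta$; your composite nonlocal Poincar\'e inequality packages these same steps, so your instinct here is sound.
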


This lemma gives a complete control on the bilinear form $B_{\delta}$, and is crucial in the well-posedness and convergence analysis. The main idea of proof is to apply Poincare inequality to the interior terms of $B_{\delta}$, then control the high order terms along the $2\delta$-layer of the boundary by the help of the boundary equation. We have moved the proof of such lemma into appendix due to its extensive calculation.

\section{Well-Posedness of Nonlocal Model} \label{main1}

The main purpose of this section is to prove theorem \ref{theorem3}. We will mainly apply lemma \ref{lemma1} in the proof.

\begin{proof}[Proof of Theorem \ref{theorem3}]
\begin{enumerate}
\item Recall the second equation of our model \eqref{c01}:
\begin{equation}
\mathcal{D}_{\delta} u_{\delta}(\textbf{x}) + \tilde{R}_{\delta}(\textbf{x}) v_{\delta}(\textbf{x}) =  \mathcal{Q}_{\delta} f(\textbf{x}) , \ \textbf{x} \in \partial \M,
\end{equation}
this gives
\begin{equation} \label{vdelta}
 {v}_{\delta}(\textbf{x})= \frac{ \mathcal{Q}_{\delta} f(\textbf{x}) }{\tilde{R}_{\delta}(\textbf{x})} -  \frac{\mathcal{D}_{\delta} u_{\delta}(\textbf{x}) }{  \tilde{R}_{\delta}(\textbf{x})  }, \ \textbf{x} \in \partial \M,
\end{equation}
and we apply it to the first equation of \eqref{c01} to discover
\begin{equation} \label{fix1}
\begin{split}
\mathcal{L}_{\delta}u_{\delta} (\textbf{x})+(\mathcal{G}_{\delta} \frac{\mathcal{D}_{\delta} u_{\delta} }{  \tilde{R}_{\delta} } )(\textbf{x})=\mathcal{P}_{\delta}f(\textbf{x}) +\mathcal{G}_{\delta}  (\frac{ \mathcal{Q}_{\delta} f(\textbf{x}) }{\tilde{R}_{\delta}(\textbf{x})} ), \ \textbf{x} \in \M.
\end{split}
\end{equation}
Our purpose here is to show there exists a unique solution $u_{\delta} \in L^2(\M)$ to the equation \eqref{fix1}, and thus $v_{\delta}(\textbf{x})$ can be solved by \eqref{vdelta}. In fact, according to the Lax-Milgram theorem, to present the uniqueness of $u_{\delta}$ in \eqref{fix1} and the estimate \eqref{wellposed1} for $u_{\delta}$ and $v_{\delta}$, our task can be reduced to the following 3 inequalities:
\begin{enumerate}
\item Coercivity: $$ \int_\M u_\delta(\textbf{x}) (\mathcal{L}_{\delta}u_{\delta} (\textbf{x}) + (\mathcal{G}_{\delta} \frac{\mathcal{D}_{\delta} u_{\delta} }{  \tilde{R}_{\delta} } )(\textbf{x}) ) d \mu_\textbf{x} \geq C \left \lVert u_\delta \right \rVert^2_{L^2(\M)} ,$$
\item Boundedness: for all $w_\delta \in L^2(\M)$, $$ \int_\M w_\delta(\textbf{x}) (\mathcal{L}_{\delta}u_{\delta} (\textbf{x}) + (\mathcal{G}_{\delta} \frac{\mathcal{D}_{\delta} u_{\delta} }{  \tilde{R}_{\delta} } )(\textbf{x}) ) d \mu_\textbf{x} \leq C_\delta \left \lVert u_\delta \right \rVert_{L^2(\M)}  \left \lVert w_\delta \right \rVert_{L^2(\M)},$$ 
\item Bound for right hand side: for all $w_\delta \in L^2(\M)$, $$ \int_{\M} w_{\delta}(\textbf{x}) \mathcal{P}_{\delta} f(\textbf{x}) d \mu_\textbf{x} + \int_{\M} w_{\delta}(\textbf{x}) \ \mathcal{G}_{\delta}  (\frac{ \mathcal{Q}_{\delta} f(\textbf{x}) }{\tilde{R}_{\delta}(\textbf{x})} ) d \mu_\textbf{x} \leq C \left \lVert f \right \rVert_{H^1(\M)} \left \lVert w_{\delta} \right \rVert_{L^2(\M)};$$
\end{enumerate}
where the positive constant $C_{\delta}$ in (b) depends on $\delta$, and $C$ in (a) (c) are independent on $\delta$.  We move the proof of (b) and (c) into appendix and only present (a) in this section. We denote
\begin{equation}
 \tilde{v}_{\delta}(\textbf{x})  =  \frac{\mathcal{D}_{\delta} u_{\delta}(\textbf{x}) }{  \tilde{R}_{\delta}(\textbf{x})  }, \ \textbf{x} \in \partial \M.
\end{equation}
From the proof of lemma \ref{theorem1}, we know that $$\int_{\M} \mathcal{G}_{\delta}  \tilde{v}_{\delta}(\textbf{x})   u_{\delta}(\textbf{x}) d \mu_\textbf{x}   =   \int_{\partial \M} \tilde{v}_{\delta} (\textbf{x})  \mathcal{D}_{\delta} u_{\delta}(\textbf{x}) d \tau_\textbf{x},$$ hence
\begin{equation}
\begin{split} \label{fix3}
&\int_{\M} ( \mathcal{L}_{\delta}u_{\delta} (\textbf{x})+(\mathcal{G}_{\delta} \frac{\mathcal{D}_{\delta} u_{\delta} }{  \tilde{R}_{\delta} } )(\textbf{x})  ) u_{\delta}(\textbf{x}) d \mu_\textbf{x}   
= \int_{\M} ( \mathcal{L}_{\delta}u_{\delta} (\textbf{x}) + \mathcal{G}_{\delta}  \tilde{v}_{\delta}(\textbf{x})  ) u_{\delta}(\textbf{x}) d \mu_\textbf{x}   \\
& =  \int_{\M} ( \mathcal{L}_{\delta}u_{\delta} (\textbf{x})) u_{\delta}(\textbf{x}) d \mu_\textbf{x}   +  \int_{\partial \M} \tilde{v}_{\delta} (\textbf{x})  \mathcal{D}_{\delta} u_{\delta}(\textbf{x}) d \tau_\textbf{x} \\ &= \int_{\M} ( \mathcal{L}_{\delta}u_{\delta} (\textbf{x})) u_{\delta}(\textbf{x}) d \mu_\textbf{x}   +  \int_{\partial \M} \tilde{R}_{\delta}(\textbf{x}) \tilde{v}^2_{\delta} (\textbf{x}) d \tau_\textbf{x} =  B_{\delta}[u_{\delta}, \tilde{v}_{\delta}; u_{\delta}, \tilde{v}_{\delta}] ;
\end{split}
\end{equation}
and we apply the first part of lemma \ref{lemma1} to obtain
\begin{equation}
 B_{\delta}[u_{\delta}, \tilde{v}_{\delta}; u_{\delta}, \tilde{v}_{\delta}] 
 = B_{\delta}[u_{\delta}, \tilde{v}_{\delta}; u_{\delta}, \tilde{v}_{\delta}] + \left \lVert \mathcal{D}_{\delta} u_{\delta}- \tilde{R}_{\delta}  \tilde{v}_{\delta} \right \rVert^2_{L^2(\M)}
 \geq C \left \lVert u_{\delta} \right \rVert^2_{L^2(\M)}.
 \end{equation}
Hence we have completed the proof of (a).
\item We apply a weaker argument of lemma \ref{lemma1}(i) to the model $\eqref{c01}$: if we can show 
\begin{enumerate}
\item \label{c21}
\begin{equation}
\left \lVert \nabla_{\M} (\mathcal{P}_{\delta}f) \right \rVert_{L^2(\M)}  + \frac{1}{\delta} \left \lVert \mathcal{P}_{\delta}f \right \rVert_{L^2(\M)}  \leq \frac{C}{\delta} \left \lVert f \right \rVert_{H^1(\M)}  ,
\end{equation}
and
\item \label{c20}
\begin{equation} 
\int_{\M} \mathcal{P}_{\delta}f (\textbf{x}) \  f_1(\textbf{x}) d \mu_\textbf{x} \leq C \left \lVert f \right \rVert_{H^1(\M)} \left \lVert f_1 \right \rVert_{H^1(\M)} , \qquad \forall \ f_1 \in H^1(\M);
\end{equation}
\end{enumerate}
then the second part of  lemma \ref{lemma1} will give us
\begin{equation}
\left \lVert u_{\delta} \right \rVert_{H^1(\M)}^2 +  \delta \left \lVert v_{\delta} \right \rVert_{L^2(\partial \M)}^2  \leq C \ (   \left \lVert  f \right \rVert^2_{H^1(\M)}  + \frac{1}{\delta} \left \lVert  \mathcal{Q}_{\delta}  f \right \rVert^2_{L^2(\partial \M)} +  \delta^2 \left \lVert  f \right \rVert^2_{H^1(\M)}),
\end{equation}
consequently,
\begin{equation}
\left \lVert u_{\delta} \right \rVert_{H^1(\M)}^2 \leq C \    \left \lVert  f \right \rVert^2_{H^1(\M)} .
\end{equation}

In fact, the estimate \eqref{c20} has already been shown in \eqref{c31} in the part 1 as
\begin{equation} \label{c32}
\int_{\M}  f_1(\textbf{x}) \ \mathcal{P}_{\delta} f(\textbf{x})  \ d \mu_\textbf{x}  \leq C  \left \lVert f \right \rVert_{H^1(\M)}   \left \lVert f_1 \right \rVert_{L^2(\M)}  ,
\end{equation}

so what remains to present is \eqref{c21}. Recall
\begin{equation}
\mathcal{P}_{\delta} f(\textbf{x})= \int_{\M} f(\textbf{y}) \ \bar{R}_{\delta} (\textbf{x}, \textbf{y}) d \mu_\textbf{y} +\int_{\partial \M} ((\textbf{x}-\textbf{y}) \cdot  \n(\textbf{y}) )  \ f(\textbf{y}) \ \bar{R}_{\delta} (\textbf{x}, \textbf{y}) d \mu_\textbf{y},
\end{equation}

hence
\begin{equation} \label{c33}
\begin{split}
& \left \lVert \nabla_{\M} (\mathcal{P}_{\delta}f) \right \rVert_{L^2(\M)}  + \frac{1}{\delta} \left \lVert \mathcal{P}_{\delta}f \right \rVert_{L^2(\M)} \\
 & \leq   
  \frac{1}{\delta}\left \lVert \int_{\M} f(\textbf{y}) \ \bar{R}_{\delta} (\textbf{x}, \textbf{y}) d \mu_\textbf{y} \right \rVert_{{L}_{\textbf{x}}^2(\M)}
+ \left \lVert  \nabla_{\M}^{\textbf{x}}  \int_{\M} f(\textbf{y}) \ \bar{R}_{\delta} (\textbf{x}, \textbf{y}) d \mu_\textbf{y} \right \rVert_{{L}_{\textbf{x}}^2(\M)}\\
& + \frac{1}{\delta} \left \lVert\int_{\partial \M} ((\textbf{x}-\textbf{y}) \cdot  \n(\textbf{y}) )  \ f(\textbf{y}) \ \bar{R}_{\delta} (\textbf{x}, \textbf{y}) d \mu_\textbf{y} \right \rVert_{{L}_{\textbf{x}}^2(\M)} \\
& + \left \lVert \nabla_{\M}^{\textbf{x}} \int_{\partial \M} ((\textbf{x}-\textbf{y}) \cdot  \n(\textbf{y}) )  \ f(\textbf{y}) \ \bar{R}_{\delta} (\textbf{x}, \textbf{y}) d \mu_\textbf{y} \right \rVert_{{L}_{\textbf{x}}^2(\M)}.
 \end{split}
\end{equation}
The control for the above 4 terms are exactly the same as the control for the equations \eqref{sm1} \eqref{sm2} \eqref{sm3}. As a consequence,
\begin{equation}
\left \lVert \nabla_{\M} (\mathcal{P}_{\delta}f) \right \rVert_{L^2(\M)}  + \frac{1}{\delta} \left \lVert \mathcal{P}_{\delta}f \right \rVert_{L^2(\M)}  \leq \frac{C}{\delta} \left \lVert f \right \rVert_{L^2(\M)} + \frac{C}{\delta^{\frac{1}{2}}} \left \lVert f \right \rVert_{L^2(\partial \M)} \leq \frac{C}{\delta}  \left \lVert f \right \rVert_{H^1(\M)}.
\end{equation}
Therefore we proved \eqref{c21}. Together with \eqref{c20} which is shown in \eqref{c32}, we eventually conclude $
\left \lVert u_{\delta} \right \rVert_{H^1(\M)}^2 \leq C \    \left \lVert  f \right \rVert^2_{H^1(\M)}$.

\end{enumerate}

\end{proof}

\section{Vanishing Nonlocality} \label{main2}
Our goal in this section is to prove theorem \ref{theorem2}. So far we have established the well-posedness of our model \eqref{c01}. To compare such model with its local counterpart \eqref{b01}, what we need more is the truncation error analysis between \eqref{c01} and \eqref{b01}. Fortunately, we have proved the following lemma in our previous work.
\begin{lemma}  \label{Nonlocal_Model} [Theorem 3.1 of \cite{YajieTrunc}]
Let $u \in H^4(\M)$ solves the system $\eqref{b01}$, $v(\textbf{x})=\frac{\partial u}{\partial \n}(\textbf{x}) $ for $\textbf{x} \in \partial \M$, and \begin{equation} \label{b001}
r_{in}(\textbf{x})=\mathcal{L}_{\delta} u(\textbf{x}) - \mathcal{G}_{\delta} \frac{\partial u} {\partial {\n} } (\textbf{x})
- \mathcal{P}_{\delta} f(\textbf{x}) , \qquad  \textbf{x} \in \M, 
\end{equation}
\begin{equation} \label{b002}
r_{bd}(\textbf{x})= \mathcal{D}_{\delta} u(\textbf{x}) + \tilde{R}_{\delta}(\textbf{x}) \frac{\partial u} {\partial {\n} } (\textbf{x}) - \mathcal{Q}_{\delta} f(\textbf{x}),  \qquad \textbf{x} \in \partial \M;
\end{equation} 
then we can decompose $r_{in}$ into $r_{in}=r_{it}+r_{bl}$, where $r_{it}$ is supported in the whole domain $\M$, with the following bound
\begin{equation}  \label{ddd1}
\frac{1}{\delta} \left \lVert r_{it} \right \rVert_{L^2(\M)} + \left \lVert \nabla r_{it} \right \rVert_{L^2(\M)}  \leq C \delta  \left \lVert u \right \rVert_{H^4(\M)};
\end{equation}
and $r_{bl}$ is supported in the layer adjacent to the boundary $\partial \M$ with width $2\delta$:
\begin{equation}
supp(r_{bl}) \subset \{ \textbf{x} \ \big\vert \ \textbf{x} \in \M, \ dist(\textbf{x}, \partial \M) \leq 2 \delta \ \},
\end{equation}
 and satisfy the following two estimates
\begin{equation} \label{d07}
\frac{1}{\delta} \left \lVert r_{bl} \right \rVert_{L^2(\M)} + \left \lVert \nabla r_{bl} \right \rVert_{L^2(\M)} \leq C \delta^{\frac{1}{2}}   \left \lVert u \right \rVert_{H^4(\M)};
\end{equation}
\begin{equation} \label{symme}
\begin{split}
\int_{\M}  r_{bl}(\textbf{x}) \ f_1(\textbf{x}) d \mu_\textbf{x} \leq C \delta^2 \left \lVert u \right \rVert_{H^4(\M)} ( \left \lVert  f_1\right \rVert_{H^1(\M)} + \left \lVert  \bar{f}_1\right \rVert_{H^1(\M)} +  \left \lVert  \overset{=}{f}_1\right \rVert_{H^1(\M)}  )  ,   \\ \forall \ f_1 \in H^1(\M),
\end{split}
\end{equation}
where the notations $\nabla=\nabla_{\M}$, and
\begin{equation} \label{fff1}
\bar{f}_1(\textbf{x})= \frac{1}{\bar{\omega}_{\delta}(\textbf{x})} \int_{\M} f_1 (\textbf{y}) \bar{R}_{\delta} (\textbf{x}, \textbf{y}) d \mu_\textbf{y}, \qquad \overset{=}{f}_1(\textbf{x})=\frac{1}{\overset{=}{\omega}_{\delta}(\textbf{x})} \int_{\M} f_1(\textbf{x}) \overset{=}{R}_{\delta}(\textbf{x},\textbf{y})d \mu_\textbf{y}
\end{equation}
represents the weighted average of $f_1$ in ${B}_{2\delta}(\textbf{x})$ with respect to $\bar{R}$ and $\overset{=}{R}$, and $$\bar{\omega}_{\delta}(\textbf{x})=\int_{\M} \bar{R}_{\delta}(\textbf{x}, \textbf{y}) d \mu_\textbf{y}, \ \overset{=}{\omega}_{\delta}(\textbf{x})=\int_{\M} \overset{=}{R}_{\delta}(\textbf{x},\textbf{y})d \mu_\textbf{y}, \qquad \forall \ \textbf{x} \in \M. $$

In addition, we have the following estimate for $r_{bd}$:
\begin{equation}  \label{rbd1}
\left \lVert r_{bd} \right \rVert_{L^2(\partial \M)} \leq C \delta^{\frac{5}{2}} \left \lVert u \right \rVert_{H^4( \M)} .
\end{equation}
\end{lemma}
This lemma gives a complete control on the truncation error of \eqref{c01}. Next, we apply lemma \ref{lemma1} to derive the localization rate of model under such truncation error.

\begin{proof}[Proof of theorem \ref{theorem2}]
Let us denote the error functions: $$e_{\delta}(\textbf{x})=u(\textbf{x})-u_{\delta}(\textbf{x}), \ \textbf{x} \in \M; \qquad e^{\n}_{\delta}(\textbf{x})=\frac{\partial u}{\partial \n}(\textbf{x})-v_{\delta}(\textbf{x}), \ \textbf{x} \in \partial \M.$$
 We then subtract \eqref{c01} with the equation \eqref{b001} \eqref{b002} to discover
\begin{equation} \label{d01}
\begin{cases}
\mathcal{L}_{\delta} e_{\delta}(\textbf{x}) - \mathcal{G}_{\delta} e^{\n}_{\delta}(\textbf{x})
= r_{in}, &  \textbf{x} \in \M, \\
\mathcal{D}_{\delta} e_{\delta}(\textbf{x}) + \tilde{R}_{\delta}(\textbf{x}) e_{\delta}^{\n}(\textbf{x}) = r_{bd},  & \textbf{x} \in \partial \M,
\end{cases}
\end{equation}

According to the lemma \ref{lemma1}, if the following 3 inequalities hold:
\begin{enumerate}
\item
\begin{equation} \label{d03}
\frac{1}{\delta} \left \lVert r_{in} \right \rVert_{L^2(\M)} + \left \lVert \nabla r_{in} \right \rVert_{L^2(\M)} 
 \leq C \delta^{\frac{1}{2}}   \left \lVert u \right \rVert_{H^4(\M)},
\end{equation}

\item 
\begin{equation} \label{d04}
\begin{split}
\int_{\M}  r_{in}(\textbf{x}) \ f_1(\textbf{x}) d \mu_\textbf{x} 
 \leq C \delta^2 \left \lVert u \right \rVert_{H^4(\M)} & ( \left \lVert  f_1\right \rVert_{H^1(\M)} + \left \lVert  \bar{f}_1\right \rVert_{H^1(\M)} +  \left \lVert  \overset{=}{f}_1\right \rVert_{H^1(\M)}  ) \\ & \forall \ f_1 \in H^1(\M),
\end{split}
\end{equation}

\item 
\begin{equation}  \label{d05}
\left \lVert r_{bd} \right \rVert_{L^2(\partial \M)} \leq C \delta^{\frac{5}{2}} \left \lVert u \right \rVert_{H^4( \M)} ,
\end{equation}

\end{enumerate}

then we will have the estimate
\begin{equation}
\begin{split}
\left \lVert e_{\delta} \right \rVert_{H^1(\M)}^2 +  \delta \left \lVert e^\n_{\delta} \right \rVert_{L^2(\partial \M)}^2  & \leq \delta^4 (C \delta  \left \lVert  u \right \rVert^2_{H^4(\M)})  + \frac{1}{\delta} (C \delta^5 \left \lVert u \right \rVert^2_{H^4(\M)} ) +  C \delta^4  \left \lVert  u \right \rVert^2_{H^4(\M)} \\ & \leq C \delta^4  \left \lVert  u \right \rVert^2_{H^4(\M)}.
\end{split}
\end{equation}

What left is to show the estimate \eqref{d03} \eqref{d04} and \eqref{d05}. In fact, \eqref{d03} is a direct sum of \eqref{ddd1} and \eqref{d07},  while \eqref{d05} is exactly \eqref{rbd1}. For  \eqref{d04}, we present such estimate by combining \eqref{symme} with the following inequality
\begin{equation}  \label{d09}
\int_{\M}  r_{it}(\textbf{x}) \ f_1(\textbf{x}) d \mu_\textbf{x} \leq C \delta^2 \left \lVert u \right \rVert_{H^4(\M)} \left \lVert  f_1\right \rVert_{H^1(\M)}, \qquad \forall \ f_1 \in H^1(\M),
\end{equation}
which is derived by  $\left \lVert r_{it} \right \rVert_{L^2(\M)} \leq  \delta^2 \left \lVert u \right \rVert_{H^4(\M)}$ that mentioned in \eqref{ddd1}.
We then complete our proof.
 
\end{proof}


\section{Numerical Simulation of Nonlocal Model}
 \subsection{Point Integral Method}
In conclusion, the analysis in the previous sections indicates that our model \eqref{c01} approximates the manifold Poisson problem \eqref{b01} in the quadratic rate. So far our results are all on the continuous setting. Nevertheless, a natural thinking is to solve \eqref{c01} with proper numerical scheme, so that to obtain a numerical solution for the widely-studied manifold Poisson problem \eqref{b01}.
  As we mentioned in the beginning, a corresponding numerical method named point integral method(PIM) can be applied. The main idea is to sample the manifold and its boundary with a set of sample points, which is usually called point cloud, then to approximate the integral of a function by adding up the value of the function at each sample point multiplied by its volume weight.  The calculation of volume weight involves the use of $K$-nearest neighbors to construct local mesh around each points. The numerical solution is then obtained by solving the discretized system of linear equations. See \cite{LSS} for detailed explanation of point integral method.
  
In this work, we will apply PIM to our model \eqref{c01}. Assuming we are given the input value $\delta$, the kernel function $R(r)$, the set of points $\PP=\{ \textbf{p}_i \}_{i=1}^n $ that samples $\M$, $\QQ=\{ \q_k\}_{k=1}^m$ that samples $\partial \M$;  the volume weight $\mathcal{A}=\{A_i \}_{i=1}^n$ for each $\textbf{p}_i  \in \M$, and the hypersurface weight $\LL=\{ L_k \}_{k=1}^m$ for each $\q_k \in \partial \M$.

Then we discretize \eqref{c01} into the following linear system:
\begin{equation}  \label{numericalsoln}
\begin{cases}
\sum \limits_{j=1}^n L_{\delta}^{ij} (u_i-u_j) - \sum \limits_{k=1}^m G_{\delta}^{ik} v_k = f_{1\delta}^i & i=1,2,...n.\\
\sum \limits_{j=1}^n D_{\delta}^{lj} u_j + \tilde{R}_{\delta}^l v_l=f_{2\delta}^l & l=1,2,...,m.
\end{cases}
\end{equation}
where the discretized coefficients are given as follows
\begin{equation} \label{Ldelta}
L_\delta^{ij} =\frac{1}{\delta^2} R_\delta(\textbf{p}_i, \textbf{p}_j) A_j ,
\end{equation}
\begin{equation} \label{Gdelta}
G_{\delta}^{ik} =(2 + \kappa_{n}(\q_k)(\textbf{p}_i-\q_k) \cdot \n_k) \bar{R}_\delta (\textbf{p}_i, \q_k) L_k,
\end{equation}
\begin{equation}
f_{1\delta}^i=\sum \limits_{j=1}^n f(\textbf{p}_j) \bar{R}_\delta( \textbf{p}_i, \textbf{p}_j) A_j - \sum \limits_{k=1}^m ( \textbf{p}_i-\q_k) \cdot \n_k f(\q_k) \bar{R}_\delta( \textbf{p}_i, \q_k) L_k,
\end{equation}
\begin{equation} \label{Ddelta}
D_\delta^{lj}=(2 -  \kappa_{n} (\q_l) (\q_l-\textbf{p}_j) \cdot \n_l) \bar{R}_\delta (\q_l,\textbf{p}_j) A_j ,
\end{equation}
\begin{equation}
\tilde{R}_{\delta}^l =4 \delta^2 \sum \limits_{k=1}^m \overset{=}{R}_\delta(\q_l, \textbf{p}_k) L_k - \sum \limits_{j=1}^n  \kappa_{n} (\q_l) ( (\q_l-\textbf{p}_j) \cdot \n_l )^2 \bar{R}_\delta(\q_l,\textbf{p}_j) A_j ,
\end{equation}
\begin{equation}
f_{2\delta}^l=-2\delta^2 \sum \limits_{j=1}^n f(\textbf{p}_j) \overset{=}{R}_\delta(\q_l, \textbf{p}_j) A_j .
\end{equation}
Here $\n_k=\n(\q_k)$, $R_{\delta}, \bar{R}_{\delta}$ and $\overset{=}{R}_{\delta}$ are the kernel functions defined in section 1.  The system \eqref{numericalsoln} gives $(m+n)$ linear equations on the variables $\{u_i\}_{i=1,...,n}$, $\{v_k\}_{k=1,2,...,m}$,  in the matrix form:
\begin{equation} \label{matrix1}
\begin{pmatrix}
P_{\delta}  -L_{\delta} & -G_{\delta} \\
D_{\delta}   &  \tilde{Q}_{\delta}
\end{pmatrix}
\begin{pmatrix}
U \\
V
\end{pmatrix}
=
\begin{pmatrix}
f_{1\delta} \\
f_{2\delta}
\end{pmatrix},
\end{equation}
where $L_{\delta}, G_{\delta}, D_{\delta}$ are the matrix forms of  \eqref{Ldelta}, \eqref{Gdelta}, \eqref{Ddelta};  $P_{\delta}=diag(\sum \limits_{j=1}^n L_{\delta}^{ij} )_{i=1,2,...,n} $, $ \tilde{Q}_{\delta}=diag(\tilde{R}_{\delta}^l )_{l=1,2,...,m} $, $U=\{u_i\}_{i=1,...,n}$, $V=\{v_k\}_{k=1,2,...,m}$.

As \eqref{numericalsoln} is the discretized form of \eqref{c01}, while \eqref{c01} converges to \eqref{b01} in a rate of $\mathcal{O}(\delta^2)$ according to Theorem \ref{theorem2}, our expectation is that \eqref{numericalsoln} would give a numerical solution to the Poisson problem \eqref{b01} in certain form. 
 In fact, theoretically, we can prove the following two propositions: 
\begin{proposition}
\begin{enumerate}
\item \eqref{matrix1} assures a unique solution vector $(U,V)$. 
\item Let 
$$h_1=h_1(\PP,\mathcal{A}, \M)= \sup \limits_{f_1 \in C^1(\M)} \frac{ \Big\vert \int_{\M} f_1(\textbf{y}) d \mu_{\textbf{y}} - \sum \limits_{i=1}^n f_1(\textbf{p}_i) A_i \Big\vert }  { \vert supp(f_1)\vert \left \lVert f_1 \right \rVert_{C^1(\M)} }, $$
$$h_2=h_2(\QQ, \LL, \partial \M) = \sup \limits_{g_1 \in C^1(\partial \M)} \frac{ \Big\vert \int_{\partial \M} g_1(\textbf{y}) d \tau_{\textbf{y}} - \sum \limits_{k=1}^m g_1(\textbf{p}_i) L_k \Big\vert }  { \vert supp(g_1)\vert \left \lVert g_1 \right \rVert_{C^1(\partial \M)} }$$
be the integral accuracy indexes of the point cloud $(\PP,\mathcal{A})$ that samples $\M$, $(\QQ,\LL)$ that samples $\partial \M$,  where $\vert supp(f_1)\vert, \ \vert supp(g_1)\vert $ is the volume of the support of $f_1, g_1$.

On the other hand, based on the solution vector $(U,V)$ of \eqref{matrix1}, we construct a function $I_u$ defined in $\M$:
 \begin{equation}
 \begin{split}
& I_u(\textbf{x})=\frac{1}{\sum \limits_{j=1}^n \frac{1}{\delta^2} R_\delta(\textbf{x}, \textbf{p}_j) A_j}
\Big( \sum \limits_{j=1}^n \bigg( \frac{1}{\delta^2} R_\delta(\textbf{x}, \textbf{p}_j)  u_j+
 f(\textbf{p}_j) \bar{R}_\delta( \textbf{x}, \textbf{p}_j)  \bigg) A_j \\
& +\sum \limits_{k=1}^m \bigg( (2 + \kappa_{n}(\q_k)(\textbf{x}-\q_k) \cdot \n_k) \bar{R}_\delta (\textbf{x}, \q_k) v_k
 -  ( \textbf{x}-\q_k) \cdot \n_k f(\q_k) \bar{R}_\delta( \textbf{x}, \q_k)  \bigg) L_k \Big).
 \end{split}
 \end{equation}
 then $I_u(\textbf{p}_j)=u_j$ for each $\textbf{p}_j \in \PP$, and there exist constants $C$ and $T_0$ depend only on $\M$ such that for any $t \leq T_0$,
 \begin{equation}
 \left \lVert u-I_u \right \rVert_{H^1(\M)} \leq C (\delta+\frac{h_1(\PP,\mathcal{A},\M) +h_2(\QQ,\LL,\partial \M) }{\delta^3} )  \left \lVert f \right \rVert_{C^1(\M)} ,
 \end{equation}
 where $u$ is the solution of the Poisson problem \eqref{b01}.

\end{enumerate}
\end{proposition}
For the proof proposition 1, we denote $\tilde{A}=diag(A_j)_{j=1,2,...,n}, \tilde{L}=diag(L_k)_{k=1,2,...,m}$,  then
\begin{equation} \label{matrix2}
\begin{pmatrix}
\tilde{A} & 0 \\
0 & \tilde{L}
\end{pmatrix}
\begin{pmatrix}
P_{\delta}  -L_{\delta} & -G_{\delta} \\
D_{\delta}   &  \tilde{Q}_{\delta}
\end{pmatrix}
=
\begin{pmatrix}
\tilde{A} (P_{\delta}  -L_{\delta}) & -\tilde{A} G_{\delta}    \\
 \tilde{L}D_{\delta}  &  \tilde{L}  \tilde{Q}_{\delta}
\end{pmatrix}.
\end{equation}
By definition, $\tilde{A} (P_{\delta}  -L_{\delta}) $ is symmetric and diagonally dominated with positive diagonal; $ \tilde{L}  \tilde{Q}_{\delta}$ is a diagonal matrix with all positive elements; and $ \tilde{A} G_{\delta}= (\tilde{L}D_{\delta})^T$. Apparently, the determinant of \eqref{matrix2} is positive, which assures the uniqueness of solution of \eqref{matrix1}. \\

We omit the proof of proposition 2 since several advanced tools are utilized, see the proof of Theorem 1 of \cite{Base1} for reference. Proposition 2 correlates $(U,V)$ with the solution $u$ of \eqref{b01}. If the point cloud $(\PP,\mathcal{A})$, $(\QQ,\LL)$ are sufficiently accurate, by choosing appropriate $\delta=\delta(h_1,h_2)$, we have
\begin{equation}
\lim \limits_{h_1, h_2 \to 0}  \left \lVert u-I_u \right \rVert_{H^1(\M)}=0.
\end{equation}
This indicates that $I_u$ approximates $u$ as $h_1, h_2 \to 0$. 
In the remaining part of this section, we will do several specific numerical examples to study numerically the rate of convergence from $I_u$ to $u$.
Due to the difficulty on the control of $(u-I_u)$ through implementation, we recall $I_u(\textbf{p}_j)=u_j$ and employ two alternative functions that refer to the relative error between $u$ and $(U,V)$, where only the values on the point cloud are counted:
 \begin{equation} \label{ee2}
\mbox{relative interior} \ l^2 \ \mbox{error: } \ e_2=\sqrt{ \frac{
 \sum \limits_{j=1}^{n} ( u_{j}-u(\textbf{p}_{j}))^2 A_{j} }{ \sum \limits_{j=1}^{n}  u^2(\textbf{p}_j) A_{j} }}
 \end{equation}
 
  \begin{equation} \label{ee2b}
\mbox{relative boundary} \ l^2 \ \mbox{error: } \ e_2^b=\sqrt{ \frac{ 
 \sum \limits_{k=1}^m (v_k- {\frac{\partial u}{\partial \n}} (\q_k) )^2 L_k }
{ \sum \limits_{k=1}^m v^2(\q_k) L_k   } }
 \end{equation}

In addition, for all the numerical examples in this section, we choose the following kernel function $R$ for convenience:
\begin{equation} \label{kernelr}
R(r)=
\begin{cases}
\frac{1}{2}(1+ \cos \pi r ), & 0 \leq r \leq 1, \\
0, & r>1.
\end{cases}
\end{equation}
so that the functions $R_{\delta}, \bar{R}_{\delta}$ and $\overset{=}{R}_{\delta}$ in \eqref{numericalsoln} can be calculated through the definitions \eqref{kernels12} \eqref{kernels3} when $\delta$ is given. 

\subsection{Example: 2D Hemisphere Embedded in $\mathbb{R}^3$ } \label{hmsphere}
In the first example, we let the manifold $\M$ be the upper half of the unit hemisphere, with equation
\begin{equation}
x^2+y^2+z^2=1, \qquad z \geq \frac{1}{2},
\end{equation}
while its boundary $\partial \M$ be the unit circle $x^2+y^2=1, z =\frac{1}{2}$. 
In the local Poisson problem \eqref{b01}, we let the exact solution $u$ be $u(x,y,z)=z^3-\frac{1}{4}z$. By the definition of $\nabla_{\M}$ and $\Delta_{\M}$, we parametrize $\M$ and calculate 
\begin{equation}
\frac{\partial u}{\partial \n}(x,y,\frac{1}{2}) =-\frac{\sqrt{3}}{4}, \qquad f=-\Delta_{\M} u(x,y,z)=12z^3-\frac{13}{2}z.
\end{equation}

Next, to solve for $(U,V)$ from \eqref{numericalsoln}, we implement the following iteration through Matlab:

for $t=4:11$
\begin{enumerate} 
\item choose $\delta=1/t$, and let $n=2t^4+4t^2$, $m=4t^2$;
\item set two random vectors $P=rand(n,1), Q=rand(n-m,1)$, then for each integer $i \in [1,n-m]$, let $\textbf{p}_i=\big( \sqrt{1- (\frac{Q(i)+1}{2})^2} \cos (2\pi P(i)), \sqrt{1- (\frac{Q(i)+1}{2})^2} \sin(2\pi P(i)),  \frac{Q(i)+1}{2} \big)$; for each $i \in [n-m+1,n]$, let $\textbf{p}_i=\big( \frac{\sqrt{3}}{2} \cos (2\pi P(i)),  \frac{\sqrt{3}}{2} \sin(2\pi P(i)),  \frac{1}{2} \big)$;
\item  let $\q_k=\textbf{p}_{k+n-m}, i=1,2,...,m$;
\item for each $i \in [1,n]$, we first find $20$ points from the point cloud $\PP$ that are closest to $\textbf{p}_i$, then project them onto the tangent plane of $\M$ at $\textbf{p}_i$;
\item we rebuild the coordinate of these 21 points on such plane, with $\textbf{p}_i$ located at $(0,0)$. Based on the new coordinates, we construct a $2d$ triangulation of these 21 points using Matlab function delaunay;
\item in such triangulation, we collect all the triangles that has $\textbf{p}_i$ as its one vertex. The value of $A_i$ is then assigned to be $1/3$ the sum of areas of these triangles;
\item we sort the array $X=[ P(n-m+1), ... , P(n)]$ from small to large, to write as $Y=[X(i_1),X(i_2),...,X(i_m)]$. Then for each $i_k \in [1,m]$, we assign
$ L_{i_k}= \frac{1}{2}( \big\vert \q_{i_{k+1}}-\q_{i_{k}} \big\vert+\big\vert \q_{i_k}-\q_{i_{k-1}} \big\vert) $. Here $\q_{i_0}=\q_{i_m}$, $\q_{i_{m+1}}=\q_{i_1}$;
\item calculate $f(\q_k), f(\textbf{p}_i), R_{\delta}, \bar{R}_{\delta}$ and $\overset{=}{R}_{\delta}$. In this specific example, $\kappa_{\n}(\q_k) =-\frac{\sqrt{3}}{3}$, $\n_k = <\frac{\sqrt{3}}{3} \q_k(1), \frac{\sqrt{3}}{3} \q_k(2), -\frac{\sqrt{3}}{2}>$. We then complete the stiff matrix of \eqref{numericalsoln};
\item we use function GMRES to solve the system \eqref{numericalsoln} and obtain the solution vector $(U,V)$.
By comparing it with $u$ and $\frac{\partial u}{\partial \n}$, we output the value $e_2$ and $e_2^b$ defined in \eqref{ee2} and \eqref{ee2b}. 
\end{enumerate} 
end for \\
 
In step $2-3$, we randomly generate an $n$-point cloud on $\M$ and an $m$-point cloud on $\partial \M$ to assure that the points are basically uniformly distributed. The choice of $n$ and $m$ guarantees that the nonlocal horizon $\delta$ is approximately $0.75\sqrt{h}$, where $h$ refers to the average mesh size. This empirically brings more accuracy. Step $4-6$ provides a method to generate a volume weight vector $\mathcal{A}$ for the point cloud $\PP$, where the use of $K$-nearest points method is involved on the construction of local mesh around each point. see Algorithm $7$ in page 14 of \cite{Base1} on how to generate a volume weight vector for a point cloud of general dimensional manifold. The volume weight vector $\LL$ of $\QQ$ is provided in step 7. We record the output $e_2, \ e_{2b}$ in each iteration, and their rate of change with respect to $\delta$ between each consecutive iterations in the Table \ref{figure61}.
\begin{table}[htb] 
\begin{tabular}{|c|c|c|c|c|c|c|}
\hline
$\delta$ & n & m    & $e_2$ &  rate of $e_2$ w.r.t $\delta$ & $e_2^b$ & rate of $e_2^b$ w.r.t. $\delta$ \\
\hline
0.250 &  576 & 64 &   0.0158 &  N/A &  0.0465   &  N/A \\
\hline
0.200 & 1350 & 100 & 0.0099 &  2.0950 &  0.0288   &  2.1469  \\
\hline
0.167 & 2736 & 144  & 0.0078 &  1.3076    &  0.0221   &  1.4524  \\       
         \hline 
0.143 & 4998  & 196   & 0.0056 & 2.1496  &  0.0125   & 3.6967   \\
\hline
0.125 & 8448 &256  & 0.0040 &  2.5198  &   0.0098  &  1.8224  \\
\hline
0.111 & 13446 & 324  & 0.0033 &  1.6333  &  0.0078    & 1.9380  \\
\hline
0.100 & 20400 & 400  & 0.0026  &  2.2628 &   0.0062   & 2.1789  \\       
\hline 
0.091 & 29766  & 484    & 0.0020 & 2.7527   &   0.0051   & 2.0492  \\
\hline
\end{tabular}
\caption{ \label{figure61} error of PIM and rate of convergence between each iteration in example 1.}
\label{diagram1}
\end{table}

Next, to evaluate geometrically the rate of convergence, we plot the $8$ points $(\ln(\delta), \ln(e_2))$ from each iteration on the $2D$ rectangular coordinate system, and sketch the auxiliary line $y=2x-1.34$; besides, we plot the $8$ points $(\ln(\delta), \ln(e_{2b}))$ on the same plane and sketch the auxiliary line $y=2x-0.45$, to obtain Figure \ref{figure62}.
\begin{figure}[htb] 
\centering
\includegraphics[width=.90\textwidth]{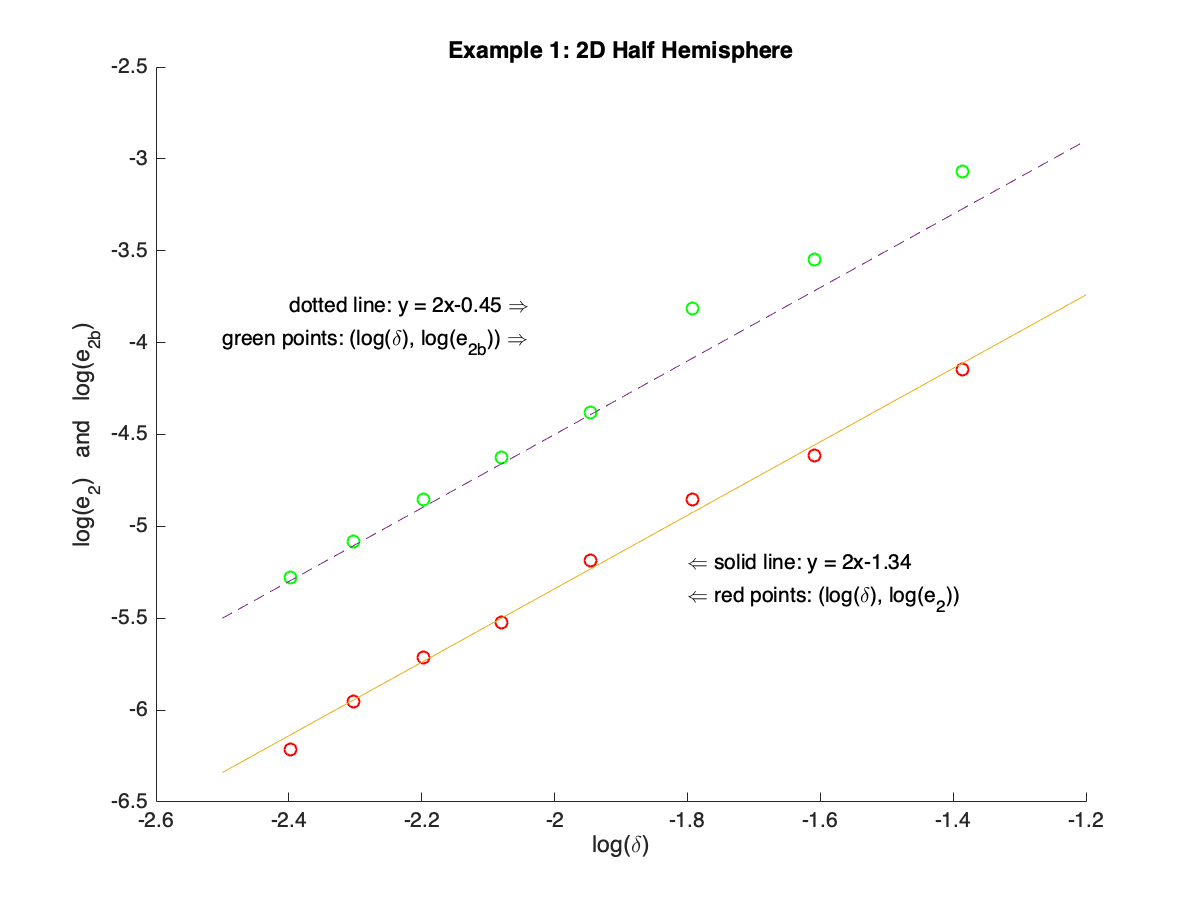}
\caption{   \label{figure62} $l^2$ approximation error vs $\delta$ and their fitting lines in example 1. }
 \end{figure}

Figure \ref{figure62} indicates that $e_2, \ e_{2b} $ are almost linearly dependent on $\delta^2$, where the perturbation mainly comes from the randomness of point cloud. Next, we introduce another example in higher dimensional manifold.

\subsection{3D Manifold Embedded in $\mathbb{R}^4$} \label{3dball}
In the second example, we let $\M$ be the following manifold embedded in $\mathbb{R}^4$:
\begin{equation}
x^2+y^2+z^2+w^2=1, w \geq 0,
\end{equation}
with its boundary $\partial \M$ be the unit ball $x^2+y^2+z^2=1, \ w=0$.
In the local Poisson problem \eqref{b01}, we let the exact solution $u$ be $u(x,y,z,w)=xw$. By the definition of $\Delta_{\M}$, we parametrize $\M$ and calculate 
\begin{equation}
f(x,y,z,w)=-\Delta_{\M} u=8xw, \qquad  \frac{\partial u}{\partial \n}(x,y,z,0)=-x.
\end{equation}

Still, we implement the following iteration through Matlab to solve for $(U,V)$ from \eqref{numericalsoln}:

for $t=2:6$
\begin{enumerate} 
\item choose $\delta=1/t$, and let $n=3t^6+4t^4$, $m=4t^4$;
\item set the random $3 \times n$ matrix $P=rand(n,3)$, then for each integer $i \in [1,n-m]$, let $\textbf{p}_i=\big( \sqrt{P(i, 1)} \cos (2\pi P(i,2)), $ $ \sqrt{P( i, 1)} \sin(2\pi P(i,2)),  \sqrt{1-P(i, 1)}\cos (\pi P(i,3)),  \sqrt{1-P(i, 1)}\sin (\pi P(i,3))   \big)$; for each $i \in [n-m+1,n]$, let $\textbf{p}_i=\big( \sqrt{1-(2P(i,2)-1)^2} \cos (2\pi P(i,1)), \sqrt{1-(2P(i,2)-1)^2}  \sin(2\pi P(i,1)), $ $  2P(i,2)-1 \big)$;
\item  let $\q_k=\textbf{p}_{k+n-m}, i=1,2,...,m$;
\item for each $i \in [1,n]$, we first find $50$ points from $\PP$ that are closest to $\textbf{p}_i$, then project them onto the $3d$ tangent hyperplane of $\M$ at $\textbf{p}_i$;
\item we rebuild the coordinate of these $51$ points on such hyperplane, or equivalently, $3d$ space, with $\textbf{p}_i$ located at $(0,0,0)$. Based on the new coordinates, we construct a $3d$ triangulation of these $51$ points using Matlab function delaunay;
\item for each $i \in [1,m]$, we first find $20$ points from $\QQ$ that are closest to $\q_k$, then project them onto the tangent plane of $\M$ at $\q_k$;
\item we rebuild the coordinate of these $21$ points on such plane, with $\q_k$ located at $(0,0)$. Based on the new coordinates, we construct a $2d$ triangulation of these 21 points using Matlab function delaunay;
\item in such triangulation, we collect all the triangles that has $\q_k$ as its one vertex. The value of $L_i$ is then assigned to be $1/3$ the sum of areas of these triangles;
\item calculate $f(\q_k), f(\textbf{p}_i), R_{\delta}, \bar{R}_{\delta}$ and $\overset{=}{R}_{\delta}$. In this specific example, $\kappa_{\n}(\q_k) \equiv 0$, $\n_k \equiv <0, 0,0,-1>$. We then complete the stiff matrix of \eqref{numericalsoln};
\item we use function GMRES to solve the system \eqref{numericalsoln} and obtain the solution vector $(U,V)$.
By comparing it with $u$, we output the value $e_2$ and $e_2^b$ defined in \eqref{ee2} and \eqref{ee2b}.
\end{enumerate} 
end for \\

In step $2-3$, we randomly generate an $n$-point cloud on $\M$ and an $m$-point cloud on $\partial \M$ to assure that the points are basically uniformly distributed. The choice of $n$ and $m$ guarantees that the nonlocal horizon $\delta$ is approximately $0.75\sqrt{h}$, where $h$ refers to the average mesh size.  The volume weight vector $\mathcal{A}$ for the point cloud $\PP$ is provided in step 4-5, while $\LL$ for $\QQ$ is provided in step 6-8, in which the K-nearest points method is applied.
Same as example 1, we record $e_2$ and $e_{2b}$ from each iteration in Table \ref{figure63} and compute their rate of change with respect to $\delta$ between each consecutive iteration. Besides, in Figure \ref{figure64}, we plot the 10 points $(\ln(\delta), \ln(e_2))$ and $(\ln(\delta), \ln(e_{2b}))$ on the 2D rectangular coordinate system and sketch 2 auxiliary lines $y=2x-0.7$, $y=2x-1.95$ to better explain the rate of convergence.

\begin{table}[htb] 
\begin{tabular}{|c|c|c|c|c|c|c|}
\hline
$\delta$ & n & m    & $e_2$ &  rate of $e_2$ w.r.t $\delta$ & $e_2^b$ & rate of $e_2^b$ w.r.t. $\delta$ \\
\hline
0.500 &  256 & 64 &   0.0477 &  N/A &  0.2248   &  N/A \\
\hline
0.333 &  2511 & 324 &   0.0156 &  2.7565 &  0.0628   &  3.1452 \\
\hline
0.250 &  13312 & 1024 &   0.0096 &  1.6877 &  0.0384  &  1.7099 \\
\hline
0.200 & 49375 & 2500 & 0.0065 &  1.7476 &  0.0195  &  3.0368  \\
\hline
0.167 & 145152 & 5184  & 0.0040 &  2.6629    &  0.0121   &  2.6174  \\       
         \hline 
\end{tabular}
\caption{ \label{figure63}  error of PIM and rate of convergence between each iteration in example 2.}
\label{diagram3}
\end{table}
\begin{figure}[htb]  
\centering
\includegraphics[width=.90\textwidth]{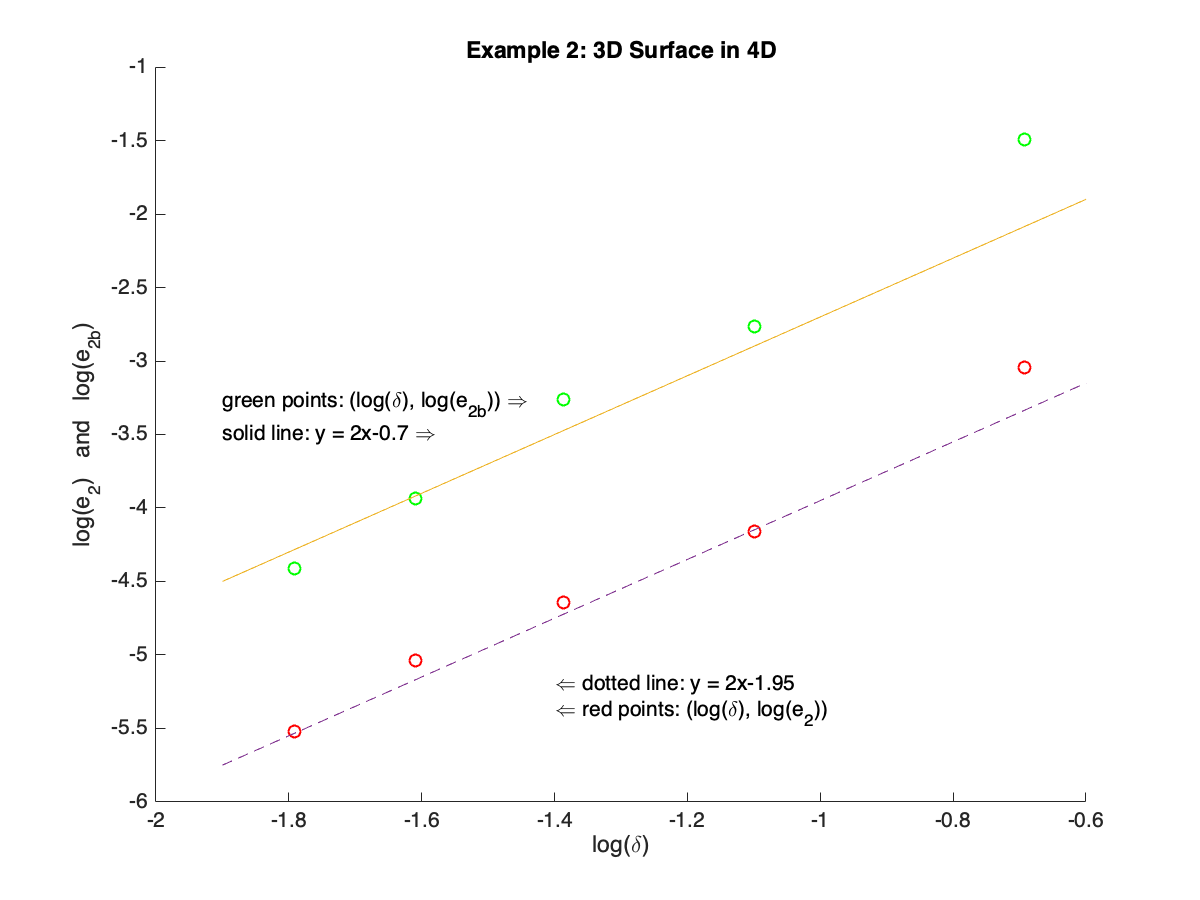}
\caption{ \label{figure64} $l^2$ approximation error vs $\delta$ and their fitting lines in example 2. }
 \end{figure}

Figure \ref{figure64} indicates that $e_2, \ e_{2b} $ are almost linearly dependent on $\delta^2$ as well as in example 1.
The results in the previous two examples imply that the discrete solution of \eqref{numericalsoln} generated by PIM converges to the exact solution of \eqref{b01} in a rate of $\mathcal{O}(\delta^2)$ in the discrete $l^2$ norm, which is $\mathcal{O}(h)$ where $h$ refers to  the average mesh size. 
Example 1 and 2 numerically illustrate the localization rate of our nonlocal model in Theorem \ref{theorem3} on the other side.

\subsection{Numerical Test for Non-homogeneous Boundary} 

We now extend our local Poisson problem into the case with non-homogeneous Dirichlet boundary:
\begin{equation}  \label{bb01}
\begin{cases}
-\Delta u(\textbf{x})=f(\textbf{x}) & \textbf{x} \in \M; \\
u(\textbf{x})=g(\textbf{x}) & \textbf{x} \in \partial \M.
\end{cases}
\end{equation}
where $g \in H^3(\partial \M)$.
By analyzing again the truncation error part in \cite{YajieTrunc}, we omit the proof to establish the following nonlocal Poisson model that approximates \eqref{bb01}:
\begin{equation} \label{numericalsoln1}
\begin{cases}
\mathcal{L}_{\delta} u_{\delta}(\textbf{x}) - \mathcal{G}_{\delta} v_{\delta}(\textbf{x})
=  \mathcal{P}_{\delta} f(\textbf{x})+\mathcal{S}_\delta g(\textbf{x}) , &  \textbf{x} \in \M, \\
\mathcal{D}_{\delta} u_{\delta}(\textbf{x}) + \tilde{R}_{\delta}(\textbf{x}) v_{\delta}(\textbf{x}) =  \mathcal{Q}_{\delta} f(\textbf{x})+ \tilde{P}_\delta (\textbf{x}) g(\textbf{x}) ,  & \textbf{x} \in \partial \M.
\end{cases}
\end{equation}
where the operator
\begin{equation}
\mathcal{S}_\delta g(\textbf{x})=-\int_{\partial \M } ((\textbf{x}-\textbf{y}) \cdot \n(\textbf{y})) \ \Delta_{\partial \M} \ g(\textbf{y}) \bar{R}_\delta(\textbf{x},\textbf{y}) d \mu_\textbf{y},
\end{equation}
and the function
\begin{equation}
\tilde{P}_\delta (\textbf{x})= \int_{\M}  (2 - \kappa_{\n} (\textbf{x}) \ (\textbf{x}-\textbf{y}) \cdot \n(\textbf{x})  )  \ \bar{R}_{\delta} (\textbf{x}, \textbf{y}) d \mu_\textbf{y}.
\end{equation}
Utilizing again the point integral method, we discretize \eqref{numericalsoln1} into the following linear system
\begin{equation} \label{numericals}
\begin{cases}
\sum \limits_{j=1}^n L_{\delta}^{ij} (u_i-u_j) - \sum \limits_{k=1}^m G_{\delta}^{ik} v_k = f_{1\delta}^i+g_{1\delta}^i & i=1,2,...n; \\
\sum \limits_{j=1}^n D_{\delta}^{lj} u_j + \tilde{R}_{\delta}^l v_l=f_{2\delta}^l+g_{2\delta}^l  & l=1,2,...,m;
\end{cases}
\end{equation}
where in addition to \eqref{numericalsoln},
\begin{equation}
g_{1\delta}^i=-\sum \limits_{k=1}^m ((\textbf{p}_i-\q_k) \cdot \n_k) \Delta_{\partial \M} \ g (\q_k) \bar{R}_\delta(\textbf{p}_i,\q_k) L_k,
\end{equation}
\begin{equation}
g_{2\delta}^l=\sum \limits_{j=1}^n (2- \kappa_{\n} (\q_l) (\q_l-\textbf{p}_j) \cdot \n_l ) \bar{R}_\delta (\q_l, \textbf{p}_j) g (\q_l) A_j.
\end{equation}

Next, we study two numerical examples with non-homogeneous Dirichlet boundary. For convenience, we use the same manifold as example $1,2$ but with different exact solution $u$.
\subsubsection{2D Unit Hemisphere}
In the third example of this section, we let the manifold be part of the unit hemisphere
\begin{equation}
x^2+y^2+z^2=1, z \geq \frac{1}{2},
\end{equation}
 and let  the exact solution $u$ be $u (x,y,z)=x$, then we calculate
\begin{equation}
f(x,y,z)=2x, \ g(x,y,\frac{1}{2})=x.
\end{equation}
Still, we choose the kernel function $R$ to be \eqref{kernelr}, and implement the following iteration to solve \eqref{numericals}:

for t=4:11
\begin{enumerate}
\item  choose $\delta=1/t$, and let $n=2t^4+4t^2$, $m=4t^2$;
\item  repeat steps $2-7$ in the numerical example of section \ref{hmsphere} to obtain the point clouds $\{ \textbf{p}_i \}_{i=1}^n , \ \{ \q_k\}_{k=1}^m$,  and their corresponding weights $\mathcal{A}$ and $\LL$;
\item  calculate $f(\q_k), f(\textbf{p}_i), g(\q_k), \Delta_{\partial \M} g(\q_k), R_{\delta}, \bar{R}_{\delta}$ and $\overset{=}{R}_{\delta}$. In this specific example, $\kappa_{\n}(\q_k) =-\frac{\sqrt{3}}{3}$, $\n_k = <\frac{\sqrt{3}}{3} \q_k(1), \frac{\sqrt{3}}{3} \q_k(2), -\frac{\sqrt{3}}{2}>$, and $\Delta_{\partial \M} g(x,y, \frac{1}{2})=-\frac{4}{3}x$. We then complete the stiff matrix of \eqref{numericals};
\item we use function GMRES to solve the system \eqref{numericals} and obtain the solution vector $(U,V)$.
By comparing it with $u$, we output the value $e_2$ and $e_2^b$ defined in \eqref{ee2} and \eqref{ee2b}.
\end{enumerate}
end for \\

We then record in Table \ref{figure65} the value of $e_2$ and $e_{2b}$ in each iteration and their rate of change with respect to $\delta$ between each consecutive iterations.
 \begin{table}[htb] 
\begin{tabular}{|c|c|c|c|c|c|c|}
\hline
$\delta $  & n & m  & $e_2$ & rate of $e_2$ w.r.t. $\delta$ & $e_2^b$ & rate of $e_{2b}$ w.r.t. $\delta$ \\
\hline
0.250 & 576 & 64  &  0.0409 & N/A  &  0.0257 & N/A \\
\hline
0.200 & 1350 & 100  & 0.0299 & 1.4039 & 0.0150 &  2.4130 \\
\hline
0.167 &  2736 & 144  & 0.0188 &  2.5450 &  0.0107   & 1.8528  \\       
         \hline 
 0.143 & 4998 & 196   & 0.0132 & 2.2941 & 0.0085  & 1.4932 \\
\hline
0.125 & 8448 &256  & 0.0088 & 3.0365 &  0.0055   &  3.2600 \\
\hline
0.111 & 13446 & 324 & 0.0066 & 2.4425 &  0.0041   &  2.4941 \\
\hline
0.100 & 20400 & 400  & 0.0054  & 1.9046 &  0.0036  &  1.2344 \\       
         \hline 
0.091 & 29766  & 484    & 0.0043 &  2.3899 &  0.0029   & 2.2686 \\
\hline
\end{tabular}
 \caption{ \label{figure65} error of PIM and rate of convergence between each iteration in example 3.}
 \end{table}

Next, we plot the $8$ points $(\ln(\delta), \ln(e_2))$ from each iteration on the $2D$ rectangular coordinate system, and sketch the auxiliary line $y=2x-0.5$; besides, we plot the $8$ points $(\ln(\delta), \ln(e_{2b}))$ on the same plane and sketch the auxiliary line $y=2x-0.95$, to obtain Figure \ref{figure66}.

 \begin{figure}[htb] 
 \centering
\includegraphics[width=.82\textwidth]{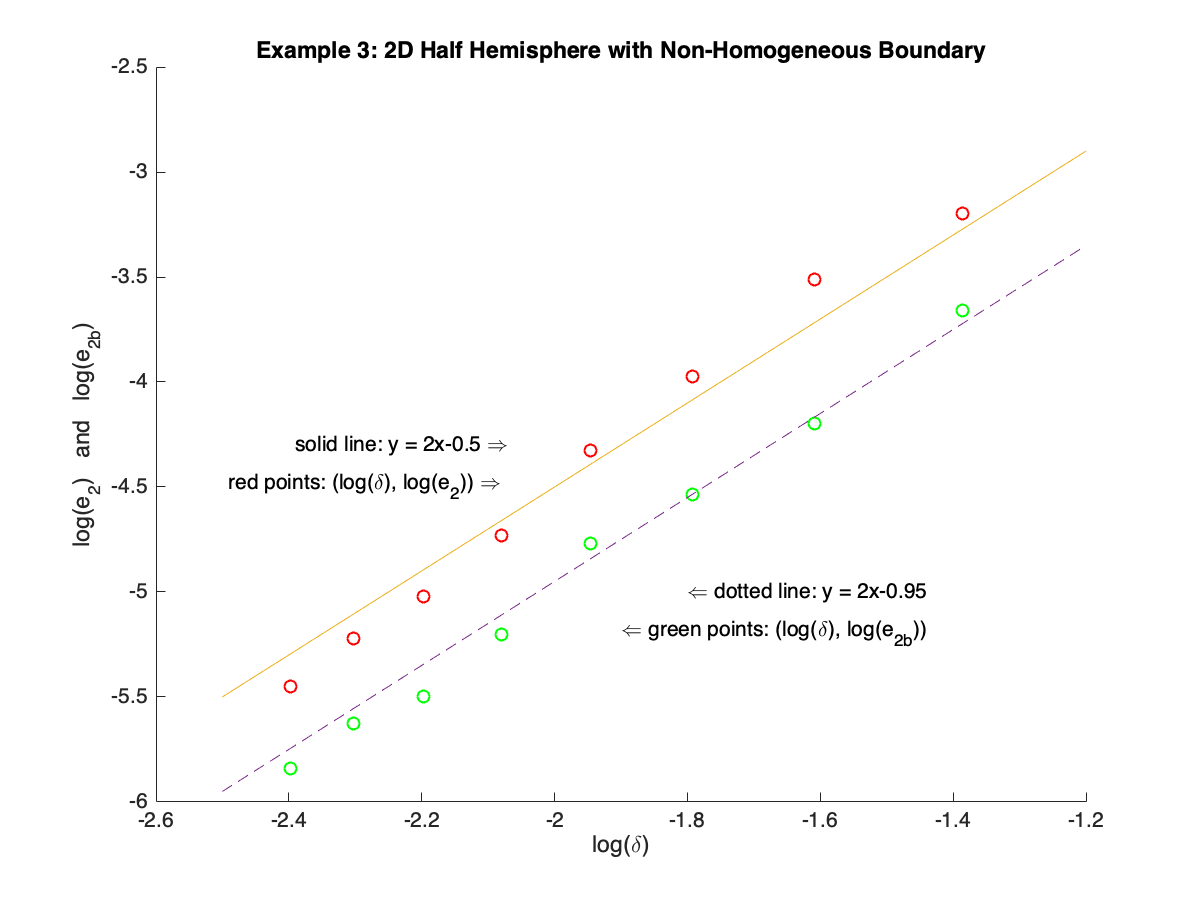}
 \caption{ \label{figure66} $l^2$ approximation error vs $\delta$ and their fitting lines in example 3.}
 \end{figure}

Figure \ref{figure66} indicates that $e_2, \ e_{2b}$ is almost linearly dependent on $\delta^2$. The perturbation is mainly from the randomness of the point cloud $\PP$ and $\QQ$.

\subsubsection{ 3D Manifold Embedded in $\mathbb{R}^4$ }
For the last example, we let the manifold $\M$ be 
\begin{equation}
x^2+y^2+z^2+w^2=1, w \geq 0,
\end{equation}
with its boundary $\partial \M$ be the unit ball $x^2+y^2+z^2=1, \ w=0$.
In the local Poisson problem \eqref{b01}, we let the exact solution $u$ be $u(x,y,z,w)=xy$. By the definition of $\Delta_{\M}$, we parametrize $\M$ and calculate 
\begin{equation}
f(x,y,z,w)=-\Delta_{\M} u=8xy, \qquad g(x,y,z,0)=xy, \qquad  \frac{\partial u}{\partial \n}(x,y,z,0)=0.
\end{equation}
Again, we choose the kernel function $R$ to be \eqref{kernelr}, and implement the following iteration to solve \eqref{numericals}:

for $t=2:6$
\begin{enumerate} 
\item choose $\delta=1/t$, and let $n=3t^6+4t^4$, $m=4t^4$;
\item repeat steps $2-8$ in the numerical example of section \ref{3dball} to obtain the point clouds $\{ \textbf{p}_i \}_{i=1}^n , \ \{ \q_k\}_{k=1}^m$,  and their corresponding weights $\mathcal{A}$ and $\LL$;
\item calculate $f(\q_k), f(\textbf{p}_i), R_{\delta}, \bar{R}_{\delta}$ and $\overset{=}{R}_{\delta}$. In this specific example, $\kappa_{\n}(\q_k) \equiv 0$, $\n_k \equiv <0, 0,0,-1>$, and $\Delta_{\partial \M} g(x,y,z,0)=-6xy$ . We then complete the stiff matrix of \eqref{numericalsoln};
\item we use function GMRES to solve the system \eqref{numericalsoln} and obtain the solution vector $(U,V)$.
By comparing it with $u$, we output the value $e_2$ and $e_2^b$ defined in \eqref{ee2} and \eqref{ee2b}.
\end{enumerate} 
end for \\

Again, we utilize Table \ref{figure67} to record $e_2, e_2^b$ and their rate of change with respect to $\delta$. In Figure \ref{figure68}, we plot the 10 points $(\ln(\delta), \ln(e_2))$ and $(\ln(\delta), \ln(e_2))$ on the 2D rectangular coordinate system and sketch 2 auxiliary lines $y=2x-0.3$, $y=2x-2.09$ to better explain the rate of convergence.
\begin{table}[htb] 
\begin{tabular}{|c|c|c|c|c|c|c|}
\hline
$\delta$ & n & m    & $e_2$ &  rate of $e_2$ w.r.t $\delta$ & $e_2^b$ & rate of $e_2^b$ w.r.t. $\delta$ \\
\hline
0.500 &  256 & 64 &   0.0323 &  N/A &  0.2131   &  N/A \\
\hline
0.333 &  2511 & 324 &   0.0126 &  2.3217 &  0.0911   &  2.0959 \\
\hline
0.250 &  13312 & 1024 &   0.0065 &  2.3008 &  0.0449  &  2.4594 \\
\hline
0.200 & 49375 & 2500 & 0.0044 &  1.7486 &  0.0255  &  2.5354  \\
\hline
0.167 & 145152 & 5184  & 0.0031 &  1.9208    &  0.0181   &  1.8800  \\       
         \hline 
\end{tabular}
\caption{ \label{figure67}  error of PIM and rate of convergence between each iteration in example 4.}
\label{diagram4}
\end{table}

\begin{figure}[htb]  
\centering
\includegraphics[width=.90\textwidth]{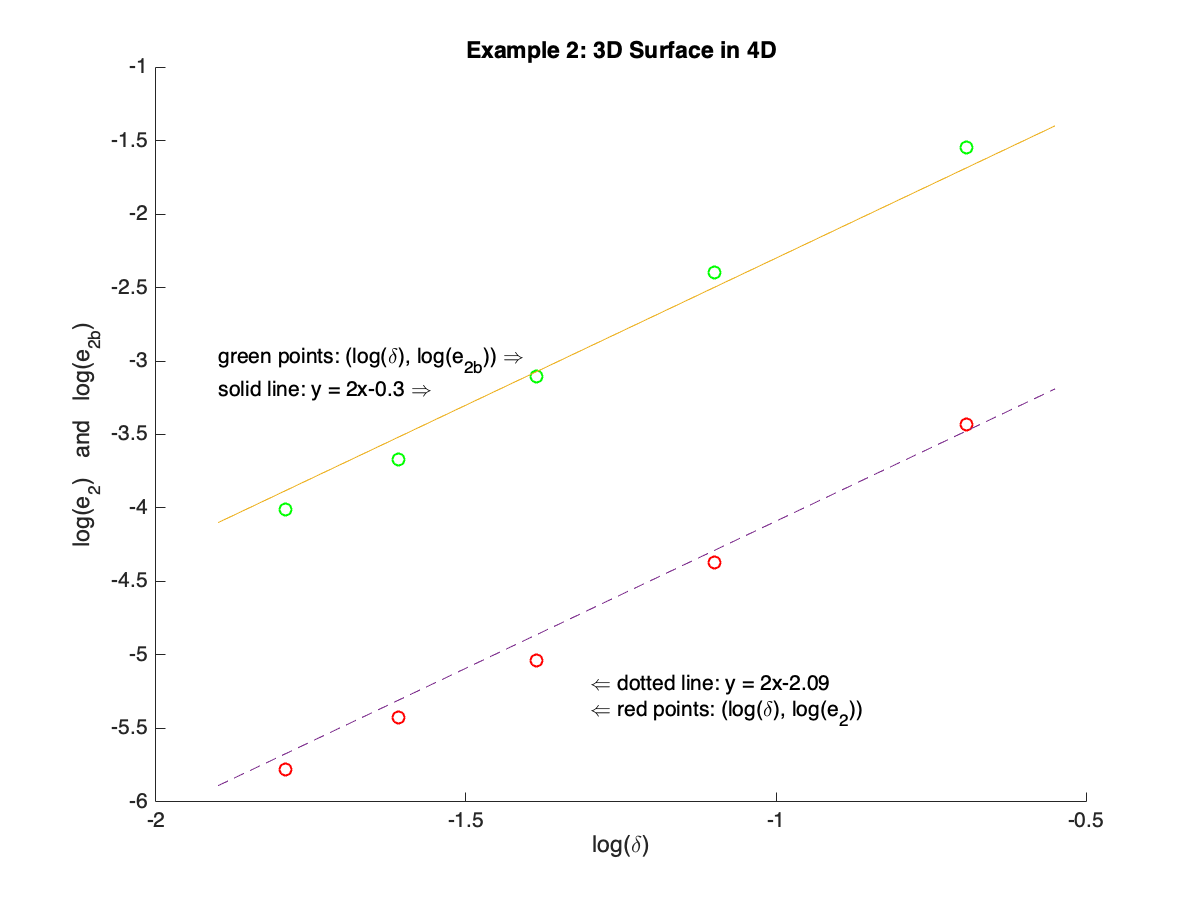}
\caption{ \label{figure68} $l^2$ approximation error vs $\delta$ and their fitting lines in example 4.}
 \end{figure}
 Figure \ref{figure68} indicates that $e_2, \ e_{2b}$ is almost linearly dependent on $\delta^2$ as well. 
 
Consequently, in the above two examples, the discrete solution of \eqref{numericals} generated by PIM converges to the exact solution of \eqref{bb01} in a rate of $\mathcal{O}(\delta^2)$ in the discrete $l^2$ norm,
which is $\mathcal{O}(h)$ where $h$ refers to the mesh size. This result indicates that the nonlocal Poisson model with non-homogeneous Dirichlet boundary can be solved by PIM as well, where the rate of convergence is preserved.
 
\subsection{Observation}
This section mainly introduces how PIM works on nonlocal manifold models and the approximation to its local counterpart. Compared to the manifold finite element method with piecewise linear elements, one advantage of PIM is that only local mesh is required so that we do not need a global mesh like the manifold FEM. Moreover, PIM can be efficiently applied when the explicit formulation of the manifold is not known except for a set of sample points, which is often occurred in data mining and machine learning models. 

Nevertheless, the quadrature rule we used in the point integral method is of low accuracy. If we have more information, such as the local mesh or local hyper-surface, we could use high order quadrature rule to improve the accuracy of the point integral method.

\section{Conclusion}
In this work, we have constructed a class of nonlocal models that approximates the Poisson equation on manifolds embedded in $\mathbb{R}^d$ under Dirichlet boundary.  Our calculation indicates that the convergence rate is $\mathcal{O} (\delta^2)$ in $H^1$ norm. To the author's best knowledge, even in the simpler case with Euclid domain of 3 dimensional or higher, all the previous studies have provided at most linear convergence rate. Having a Dirichlet-type constraint with second order convergence to the local limit in high dimensional manifold would be both mathematically and practically interesting.

Similar to the nonlocal approximation of Poisson models, the nonlocal approximation of some other types of PDEs are also of great interest. In our subsequent paper, we will introduce how to approximate the elliptic equation with discontinuous coefficients in high dimensional manifolds. Our future plan is to extend our results into a two dimensional polygonal domain where singularity appears near each vertex. The nonlocal approximation for Stokes equation with Dirichlet boundary will also be analyzed.

\section{Bibliography}
\bibliography{reference}%


\begin{thebibliography}{50}
\ifx \bisbn   \undefined \def \bisbn  #1{ISBN #1}\fi
\ifx \binits  \undefined \def \binits#1{#1}\fi
\ifx \bauthor  \undefined \def \bauthor#1{#1}\fi
\ifx \batitle  \undefined \def \batitle#1{#1}\fi
\ifx \bjtitle  \undefined \def \bjtitle#1{#1}\fi
\ifx \bvolume  \undefined \def \bvolume#1{\textbf{#1}}\fi
\ifx \byear  \undefined \def \byear#1{#1}\fi
\ifx \bissue  \undefined \def \bissue#1{#1}\fi
\ifx \bfpage  \undefined \def \bfpage#1{#1}\fi
\ifx \blpage  \undefined \def \blpage #1{#1}\fi
\ifx \burl  \undefined \def \burl#1{\textsf{#1}}\fi
\ifx \doiurl  \undefined \def \doiurl#1{\url{https://doi.org/#1}}\fi
\ifx \betal  \undefined \def \betal{\textit{et al.}}\fi
\ifx \binstitute  \undefined \def \binstitute#1{#1}\fi
\ifx \binstitutionaled  \undefined \def \binstitutionaled#1{#1}\fi
\ifx \bctitle  \undefined \def \bctitle#1{#1}\fi
\ifx \beditor  \undefined \def \beditor#1{#1}\fi
\ifx \bpublisher  \undefined \def \bpublisher#1{#1}\fi
\ifx \bbtitle  \undefined \def \bbtitle#1{#1}\fi
\ifx \bedition  \undefined \def \bedition#1{#1}\fi
\ifx \bseriesno  \undefined \def \bseriesno#1{#1}\fi
\ifx \blocation  \undefined \def \blocation#1{#1}\fi
\ifx \bsertitle  \undefined \def \bsertitle#1{#1}\fi
\ifx \bsnm \undefined \def \bsnm#1{#1}\fi
\ifx \bsuffix \undefined \def \bsuffix#1{#1}\fi
\ifx \bparticle \undefined \def \bparticle#1{#1}\fi
\ifx \barticle \undefined \def \barticle#1{#1}\fi
\bibcommenthead
\ifx \bconfdate \undefined \def \bconfdate #1{#1}\fi
\ifx \botherref \undefined \def \botherref #1{#1}\fi
\ifx \url \undefined \def \url#1{\textsf{#1}}\fi
\ifx \bchapter \undefined \def \bchapter#1{#1}\fi
\ifx \bbook \undefined \def \bbook#1{#1}\fi
\ifx \bcomment \undefined \def \bcomment#1{#1}\fi
\ifx \oauthor \undefined \def \oauthor#1{#1}\fi
\ifx \citeauthoryear \undefined \def \citeauthoryear#1{#1}\fi
\ifx \endbibitem  \undefined \def \endbibitem {}\fi
\ifx \bconflocation  \undefined \def \bconflocation#1{#1}\fi
\ifx \arxivurl  \undefined \def \arxivurl#1{\textsf{#1}}\fi
\csname PreBibitemsHook\endcsname

\bibitem{CFP97}
\begin{barticle}
\bauthor{\bsnm{Cahn}, \binits{J.W.}},
\bauthor{\bsnm{Fife}, \binits{P.}},
\bauthor{\bsnm{Penrose}, \binits{O.}}:
\batitle{A phase-field model for diffusion-induced grain-boundary motion}.
\bjtitle{Ann. Statist.}
\bvolume{36}(\bissue{2}),
\bfpage{555}--\blpage{586}
(\byear{2008})
\end{barticle}
\endbibitem

\bibitem{EE08}
\begin{barticle}
\bauthor{\bsnm{Eilks}, \binits{C.}},
\bauthor{\bsnm{Elliott}, \binits{C.M.}}:
\batitle{Numerical simulation of dealloying by surface dissolution via the
  evolving surface finite element method}.
\bjtitle{J. Comput. Phys.}
\bvolume{227},
\bfpage{9727}--\blpage{9741}
(\byear{2008})
\end{barticle}
\endbibitem

\bibitem{GT09}
\begin{barticle}
\bauthor{\bsnm{Ganesan}, \binits{S.}},
\bauthor{\bsnm{Tobiska}, \binits{L.}}:
\batitle{A coupled arbitrary lagrangian eulerian and lagrangian method for
  computation of free-surface flows with insoluble surfactants}.
\bjtitle{J. Comput. Phys.}
\bvolume{228},
\bfpage{2859}--\blpage{2873}
(\byear{2009})
\end{barticle}
\endbibitem

\bibitem{JL04}
\begin{barticle}
\bauthor{\bsnm{James}, \binits{A.J.}},
\bauthor{\bsnm{Lowengrub}, \binits{J.}}:
\batitle{A surfactant-conserving volume-of-fluid method for interfacial flows
  with insoluble surfactant}.
\bjtitle{J. Comput. Phys.}
\bvolume{201},
\bfpage{685}--\blpage{722}
(\byear{2004})
\end{barticle}
\endbibitem

\bibitem{BEM11}
\begin{barticle}
\bauthor{\bsnm{Barreira}, \binits{R.}},
\bauthor{\bsnm{Elliott}, \binits{C.}},
\bauthor{\bsnm{Madzvamuse}, \binits{A.}}:
\batitle{The surface finite element method for pattern formation on evolving
  biological surfaces}.
\bjtitle{J. Math. Biol.}
\bvolume{63},
\bfpage{1095}--\blpage{1119}
(\byear{2011})
\end{barticle}
\endbibitem

\bibitem{ES10}
\begin{barticle}
\bauthor{\bsnm{Elliott}, \binits{C.M.}},
\bauthor{\bsnm{Stinner}, \binits{B.}}:
\batitle{Modeling and computation of two phase geometric biomembranes using
  surface finite elements}.
\bjtitle{J. Comput. Phys.}
\bvolume{229},
\bfpage{6585}--\blpage{6612}
(\byear{2010})
\end{barticle}
\endbibitem

\bibitem{NMWI11}
\begin{barticle}
\bauthor{\bsnm{Neilson}, \binits{M.P.}},
\bauthor{\bsnm{Mackenzie}, \binits{J.A.}},
\bauthor{\bsnm{Webb}, \binits{S.D.}},
\bauthor{\bsnm{Insall}, \binits{R.H.}}:
\batitle{Modelling cell movement and chemotaxis using pseudopod-based
  feedback}.
\bjtitle{SIAM J. Sci. Comput.}
\bvolume{33},
\bfpage{1035}--\blpage{1057}
(\byear{2011})
\end{barticle}
\endbibitem

\bibitem{belkin2003led}
\begin{barticle}
\bauthor{\bsnm{Belkin}, \binits{M.}},
\bauthor{\bsnm{Niyogi}, \binits{P.}}:
\batitle{Laplacian eigenmaps for dimensionality reduction and data
  representation}.
\bjtitle{Neural Computation}
\bvolume{15}(\bissue{6}),
\bfpage{1373}--\blpage{1396}
(\byear{2003})
\end{barticle}
\endbibitem

\bibitem{Coifman05geometricdiffusions}
\begin{bchapter}
\bauthor{\bsnm{Coifman}, \binits{R.R.}},
\bauthor{\bsnm{Lafon}, \binits{S.}},
\bauthor{\bsnm{Lee}, \binits{A.B.}},
\bauthor{\bsnm{Maggioni}, \binits{M.}},
\bauthor{\bsnm{Warner}, \binits{F.}},
\bauthor{\bsnm{Zucker}, \binits{S.}}:
\bctitle{Geometric diffusions as a tool for harmonic analysis and structure
  definition of data: Diffusion maps}.
In: \bbtitle{Proceedings of the National Academy of Sciences},
pp. \bfpage{7426}--\blpage{7431}
(\byear{2005})
\end{bchapter}
\endbibitem

\bibitem{LZ17}
\begin{barticle}
\bauthor{\bsnm{Lai}, \binits{R.}},
\bauthor{\bsnm{Zhao}, \binits{H.}}:
\batitle{Multi-scale non-rigid point cloud registration using robust
  sliced-wasserstein distance via laplace-beltrami eigenmap}.
\bjtitle{SIAM Journal on Imaging Sciences}
\bvolume{10},
\bfpage{449}--\blpage{483}
(\byear{2017})
\end{barticle}
\endbibitem

\bibitem{MCL16}
\begin{barticle}
\bauthor{\bsnm{Meng}, \binits{T.W.}},
\bauthor{\bsnm{Choi}, \binits{P.T.}},
\bauthor{\bsnm{Lui}, \binits{L.M.}}:
\batitle{Tempo: feature-endowed teichmuller extremal mappings of point clouds}.
\bjtitle{SIAM Journal on Imaging Sciences}
\bvolume{9},
\bfpage{1582}--\blpage{1618}
(\byear{2016})
\end{barticle}
\endbibitem

\bibitem{reuter06dna}
\begin{barticle}
\bauthor{\bsnm{Reuter}, \binits{M.}},
\bauthor{\bsnm{Wolter}, \binits{F.-E.}},
\bauthor{\bsnm{Peinecke}, \binits{N.}}:
\batitle{Laplace-beltrami spectra as "shape-dna" of surfaces and solids}.
\bjtitle{Computer-Aided Design}
\bvolume{38}(\bissue{4}),
\bfpage{342}--\blpage{366}
(\byear{2006})
\end{barticle}
\endbibitem

\bibitem{CLL15}
\begin{barticle}
\bauthor{\bsnm{Choi}, \binits{P.T.}},
\bauthor{\bsnm{Lam}, \binits{K.C.}},
\bauthor{\bsnm{Lui}, \binits{L.M.}}:
\batitle{Flash: fast landmark aligned spherical harmonic parameterization for
  genus-0 closed brain surfaces}.
\bjtitle{SIAM Journal on Imaging Sciences}
\bvolume{8},
\bfpage{67}--\blpage{94}
(\byear{2015})
\end{barticle}
\endbibitem

\bibitem{Gu04}
\begin{barticle}
\bauthor{\bsnm{Gu}, \binits{X.}},
\bauthor{\bsnm{Wang}, \binits{Y.}},
\bauthor{\bsnm{Chan}, \binits{T.F.}},
\bauthor{\bsnm{Thompson}, \binits{P.M.}},
\bauthor{\bsnm{Yau}, \binits{S.-T.}}:
\batitle{Genus zero surface conformal mapping and its application to brain
  surface mapping}.
\bjtitle{IEEE TMI}
\bvolume{23},
\bfpage{949}--\blpage{958}
(\byear{2004})
\end{barticle}
\endbibitem

\bibitem{KLO17}
\begin{barticle}
\bauthor{\bsnm{Kao}, \binits{C.-Y.}},
\bauthor{\bsnm{Lai}, \binits{R.}},
\bauthor{\bsnm{Osting}, \binits{B.}}:
\batitle{Maximization of laplace-beltrami eigenvalues on closed riemannian
  surfaces}.
\bjtitle{ESAIM: Control, Optimisation and Calculus of Variations}
\bvolume{23},
\bfpage{685}--\blpage{720}
(\byear{2017})
\end{barticle}
\endbibitem

\bibitem{LWYGL14}
\begin{barticle}
\bauthor{\bsnm{Lai}, \binits{R.}},
\bauthor{\bsnm{Wen}, \binits{Z.}},
\bauthor{\bsnm{Yin}, \binits{W.}},
\bauthor{\bsnm{Gu}, \binits{X.}},
\bauthor{\bsnm{Lui}, \binits{L.}}:
\batitle{Folding-free global conformal mapping for genus-0 surfaces by harmonic
  energy minimization}.
\bjtitle{Journal of Scientific Computing}
\bvolume{58},
\bfpage{705}--\blpage{725}
(\byear{2014})
\end{barticle}
\endbibitem

\bibitem{LDMM}
\begin{botherref}
\oauthor{\bsnm{Osher}, \binits{S.}},
\oauthor{\bsnm{Shi}, \binits{Z.}},
\oauthor{\bsnm{Zhu}, \binits{W.}}:
Low dimensional manifold model for image processing.
SIAM Journal on Imaging Sciences
\textbf{10}(4)
(2017)
\end{botherref}
\endbibitem

\bibitem{Peyre09}
\begin{barticle}
\bauthor{\bsnm{Peyr\'e}, \binits{G.}}:
\batitle{Manifold models for signals and images}.
\bjtitle{Computer Vision and Image Understanding}
\bvolume{113},
\bfpage{248}--\blpage{260}
(\byear{2009})
\end{barticle}
\endbibitem

\bibitem{Lui11}
\begin{botherref}
\oauthor{\bsnm{Wong}, \binits{T.W.}},
\oauthor{\bsnm{Lui}, \binits{L.M.}},
\oauthor{\bsnm{Gu}, \binits{X.}},
\oauthor{\bsnm{Thompson}, \binits{P.}},
\oauthor{\bsnm{Chan}, \binits{T.}},
\oauthor{\bsnm{Yau}, \binits{S.-T.}}:
Instrinic feature extraction and hippocampal surface registration using
  harmonic eigenmap.
Technical Report, UCLA CAM Report 11-65
(2011)
\end{botherref}
\endbibitem

\bibitem{YajieTrunc}
\begin{botherref}
\oauthor{\bsnm{Zhang}, \binits{Y.}},
\oauthor{\bsnm{Shi}, \binits{Z.}}:
Truncation error analysis for nonlocal manifold poisson model with dirichlet
  boundary.
arkiv:2203.20120
(2022)
\end{botherref}
\endbibitem

\bibitem{Yunzhe4}
\begin{botherref}
\oauthor{\bsnm{Askari}, \binits{E.}},
\oauthor{\bsnm{Bobaru}, \binits{F.}},
\oauthor{\bsnm{Lehoucq}, \binits{R.B.}},
\oauthor{\bsnm{Parks}, \binits{M.L.}},
\oauthor{\bsnm{Silling}, \binits{S.A.}},
\oauthor{\bsnm{Weckner}, \binits{O.}}:
Peridynamics for multiscale materials modeling.
J. Physics.: Conf. Ser
\textbf{125}
(2008)
\end{botherref}
\endbibitem

\bibitem{Yunzhe8}
\begin{botherref}
\oauthor{\bsnm{Bobaru}, \binits{F.}},
\oauthor{\bsnm{Yang}, \binits{M.}},
\oauthor{\bsnm{Alves}, \binits{L.}},
\oauthor{\bsnm{Silling}, \binits{S.}},
\oauthor{\bsnm{Askari}, \binits{E.}},
\oauthor{\bsnm{Xu}, \binits{J.}}:
Convergence, adaptive refinement, and scaling in 1d peridynamics.
Int. J. Numer. Methods Eng.
\textbf{77}
(2009)
\end{botherref}
\endbibitem

\bibitem{Yunzhe13}
\begin{botherref}
\oauthor{\bsnm{Dayal}, \binits{K.}},
\oauthor{\bsnm{Bhattacharya}, \binits{K.}}:
Kinetics of phase transformations in the peridynamic formulation of continuum
  mechanics.
J. Mech. Pays. Solids
\textbf{54}
(2006)
\end{botherref}
\endbibitem

\bibitem{Yunzhe27}
\begin{botherref}
\oauthor{\bsnm{Oterkus}, \binits{E.}},
\oauthor{\bsnm{Madenci}, \binits{E.}}:
Peridynamic analysis of fiber-reinforced composed materials.
J. Mech. Mater. Struct.
\textbf{7}
(2012)
\end{botherref}
\endbibitem

\bibitem{Yunzhe31}
\begin{botherref}
\oauthor{\bsnm{Silling}, \binits{S.}},
\oauthor{\bsnm{Weckner}, \binits{O.}},
\oauthor{\bsnm{Askari}, \binits{E.}},
\oauthor{\bsnm{Bobaru}, \binits{F.}}:
Crack nucleation in a peridnamic solid.
Int. J. Fract.
\textbf{162}
(2010)
\end{botherref}
\endbibitem

\bibitem{Yunzhe32}
\begin{botherref}
\oauthor{\bsnm{Taylor}, \binits{M.}},
\oauthor{\bsnm{Steigmann}, \binits{D.}}:
A two-dimensional peridynamic model for thin plates.
Math. Mech. Solids
\textbf{20}
(2015)
\end{botherref}
\endbibitem

\bibitem{Yunzhe5}
\begin{botherref}
\oauthor{\bsnm{Barles}, \binits{G.}},
\oauthor{\bsnm{Chasseigne}, \binits{E.}},
\oauthor{\bsnm{Georgelin}, \binits{C.}},
\oauthor{\bsnm{Jakobsen}, \binits{E.}}:
On neumann type problems for nonlocal equations set in a half space.
Trans. Am. Math. Soc.
\textbf{366}
(2014)
\end{botherref}
\endbibitem

\bibitem{Yunzhe6}
\begin{botherref}
\oauthor{\bsnm{Barles}, \binits{G.}},
\oauthor{\bsnm{Georgelin}, \binits{C.}},
\oauthor{\bsnm{Jakobsen}, \binits{E.R.}}:
On neumann and oblique derivatives boundary conditions for nonlocal elliptic
  equations.
J. Differ. Equ
\textbf{256}
(2014)
\end{botherref}
\endbibitem

\bibitem{Yunzhe12}
\begin{botherref}
\oauthor{\bsnm{Cortazar}, \binits{C.}},
\oauthor{\bsnm{Elgueta}, \binits{M.}},
\oauthor{\bsnm{Rossi}, \binits{J.D.}},
\oauthor{\bsnm{Wolanski}, \binits{N.}}:
How to approximate the heat equation with neumann boundary conditions by
  nonlocal diffusion problems.
Archive Ration. Mech. Anal.
\textbf{187}
(2008)
\end{botherref}
\endbibitem

\bibitem{Yunzhe14}
\begin{botherref}
\oauthor{\bsnm{Dipierro}, \binits{S.}},
\oauthor{\bsnm{Ros-Oton}, \binits{X.}},
\oauthor{\bsnm{Valdinoci}, \binits{E.}}:
Nonlocal problems with neumann boundary conditions.
Rev. Mat. Iberoam
(2017)
\end{botherref}
\endbibitem

\bibitem{Yunzhe2}
\begin{botherref}
\oauthor{\bsnm{Alali}, \binits{B.}},
\oauthor{\bsnm{Gunzburger}, \binits{M.}}:
Peridynamics and material interfaces.
Journal of Elasticity
\textbf{120}
(2010)
\end{botherref}
\endbibitem

\bibitem{book-nonlocal}
\begin{bbook}
\bauthor{\bsnm{Andreu}, \binits{F.}},
\bauthor{\bsnm{Mazon}, \binits{J.M.}},
\bauthor{\bsnm{Rossi}, \binits{J.D.}},
\bauthor{\bsnm{Toledo}, \binits{J.}}:
\bbtitle{Nonlocal Diffusion Problems}.
\bpublisher{Math. Surveys Monogr. 165, AMS, Providence, RI}, 
(\byear{2010})
\end{bbook}
\endbibitem

\bibitem{Du-SIAM}
\begin{barticle}
\bauthor{\bsnm{Du}, \binits{Q.}},
\bauthor{\bsnm{Gunzburger}, \binits{M.}},
\bauthor{\bsnm{Lehoucq}, \binits{R.B.}},
\bauthor{\bsnm{Zhou}, \binits{K.}}:
\batitle{Analysis and approximation of nonlocal diffusion problems with volume
  constraints}.
\bjtitle{SIAM Review}
\bvolume{54},
\bfpage{667}--\blpage{696}
(\byear{2012})
\end{barticle}
\endbibitem

\bibitem{Yunzhe25}
\begin{botherref}
\oauthor{\bsnm{Mengesha}, \binits{T.}},
\oauthor{\bsnm{Du}, \binits{Q.}}:
Characterization of function space of vector fields and an application in
  nonlinear peridynamics.
Nonlinear Anal.:Theory Methods Appl.
\textbf{140}
(2016)
\end{botherref}
\endbibitem

\bibitem{ZD10}
\begin{barticle}
\bauthor{\bsnm{Zhou}, \binits{K.}},
\bauthor{\bsnm{Du}, \binits{Q.}}:
\batitle{Mathematical and numerical analysis of linear peridynamic models with
  nonlocal boundary conditions}.
\bjtitle{SIAM J. Numer. Anal.}
\bvolume{48},
\bfpage{1759}--\blpage{1780}
(\byear{2010})
\end{barticle}
\endbibitem

\bibitem{Yunzhe}
\begin{botherref}
\oauthor{\bsnm{Tao}, \binits{Y.}},
\oauthor{\bsnm{Tian}, \binits{X.}},
\oauthor{\bsnm{Du}, \binits{Q.}}:
Nonlocal diffusion and peridynamic models with neumann type constraints and
  their numerical approximations.
Applied Mathematics and Application
\textbf{305}
(2017)
\end{botherref}
\endbibitem

\bibitem{Neumann_2nd_order}
\begin{botherref}
\oauthor{\bsnm{You}, \binits{H.}},
\oauthor{\bsnm{Lu}, \binits{X.}},
\oauthor{\bsnm{Trask}, \binits{N.}},
\oauthor{\bsnm{Yu}, \binits{Y.}}:
A neumann-type boundary condition for nonlocal problems.
Mathematical Models and Methods in Applied Sciences
(2018)
\end{botherref}
\endbibitem

\bibitem{Leehwi}
\begin{botherref}
\oauthor{\bsnm{Lee}, \binits{H.}},
\oauthor{\bsnm{Du}, \binits{Q.}}:
Second order accurate dirichlet boundary conditions for linear nonlocal
  diffusion problems.
Commun. Math. Sci.
\textbf{20}
(2022)
\end{botherref}
\endbibitem

\bibitem{Base1}
\begin{botherref}
\oauthor{\bsnm{Shi}, \binits{Z.}},
\oauthor{\bsnm{Sun}, \binits{J.}}:
Convergence of the point integral method for poisson equation on point cloud.
Research in the Mathematical Sciences
\textbf{4}(1)
(2017)
\end{botherref}
\endbibitem

\bibitem{Base2}
\begin{botherref}
\oauthor{\bsnm{Shi}, \binits{Z.}}:
Enforce the dirichlet boundary condition by volume constraint in point integral
  method.
Commun. Math. Sci
\textbf{15}(6)
(2017)
\end{botherref}
\endbibitem

\bibitem{WangTangJun}
\begin{botherref}
\oauthor{\bsnm{Wang}, \binits{T.}},
\oauthor{\bsnm{Shi}, \binits{Z.}}:
A nonlocal diffusion model with h1 convergence for dirichlet boundary.
arXiv:2302.03441v1
(2023)
\end{botherref}
\endbibitem

\bibitem{Yjcms1}
\begin{botherref}
\oauthor{\bsnm{Zhang}, \binits{Y.}},
\oauthor{\bsnm{Shi}, \binits{Z.}}:
A nonlocal model of elliptic equation with jump coefficients on manifold.
Commun. Math. Sci.
\textbf{19}(7)
(2021)
\end{botherref}
\endbibitem

\bibitem{LSS}
\begin{barticle}
\bauthor{\bsnm{Li}, \binits{Z.}},
\bauthor{\bsnm{Shi}, \binits{Z.}},
\bauthor{\bsnm{Sun}, \binits{J.}}:
\batitle{Point integral method for solving poisson-type equations on manifolds
  from point clouds with convergence guarantees}.
\bjtitle{Communications in Computational Physics}
\bvolume{22}(\bissue{1}),
\bfpage{228}--\blpage{258}
(\byear{2017})
\end{barticle}
\endbibitem

\bibitem{weightedLaplacian}
\begin{botherref}
\oauthor{\bsnm{Shi}, \binits{Z.}},
\oauthor{\bsnm{Wang}, \binits{B.}}:
Convergence of the weighted nonlocal laplacian on random point cloud.
J. Comput. Phys.
\textbf{39}(6)
(2021)
\end{botherref}
\endbibitem

\bibitem{DongGuoZhi}
\begin{botherref}
\oauthor{\bsnm{Wang}, \binits{T.}},
\oauthor{\bsnm{Shi}, \binits{Z.}}:
Discontinuous galerkin methods for the laplace-beltrami operator on point
  cloud.
arXiv:2012.15433v2
(2021)
\end{botherref}
\endbibitem

\bibitem{GMLS1}
\begin{botherref}
\oauthor{\bsnm{Gross}, \binits{B.J.}},
\oauthor{\bsnm{Trask}, \binits{N.}},
\oauthor{\bsnm{Kuberry}, \binits{P.}},
\oauthor{\bsnm{Atzberger}, \binits{P.J.}}:
Meshfree methods on manifolds for hydrodynamic flows on curved surfaces: a
  generalized moving least-squares (gmls) approach.
J. Comupt. Phys.
\textbf{409}(15)
(2020)
\end{botherref}
\endbibitem

\bibitem{KBM}
\begin{botherref}
\oauthor{\bsnm{Yan}, \binits{Q.}},
\oauthor{\bsnm{Jiang}, \binits{S.W.}},
\oauthor{\bsnm{Harlim}, \binits{J.}}:
Kernel-based methods for solving time-dependent advection-diffusion equations
  on manifolds.
arXiv:2105.13835v1
(2021)
\end{botherref}
\endbibitem

\bibitem{localkernel}
\begin{botherref}
\oauthor{\bsnm{Gilani}, \binits{F.}},
\oauthor{\bsnm{Harlim}, \binits{J.}}:
Approximating solutions of linear elliptic pde's on a smooth manifold using
  local kernel.
J. Comput. Phys.
\textbf{395}
(2019)
\end{botherref}
\endbibitem

\bibitem{GBPF}
\begin{botherref}
\oauthor{\bsnm{Harlim}, \binits{J.}},
\oauthor{\bsnm{Jiang}, \binits{S.W.}},
\oauthor{\bsnm{Kim}, \binits{H.}},
\oauthor{\bsnm{Sanz-Alonso}, \binits{D.}}:
Graph-based prior and forward models for inverse problems on manifolds with
  boundaries.
Inverse Problems: An International Journal of Inverse Problems, Inverse Methods
  and Computerised Inversion of Data
\textbf{38}(3)
(2022)
\end{botherref}
\endbibitem

\bibitem{DiffMap}
\begin{botherref}
\oauthor{\bsnm{Coifman}, \binits{R.R.}},
\oauthor{\bsnm{Lafon}, \binits{S.}}:
Diffusion maps.
Applied and Computational Harmonic Analysis
\textbf{21}(1)
(2006)
\end{botherref}
\endbibitem

\end{thebibliography}

\section{Appendix Section}

\subsection{Proof of Lemma \ref{lemma1}}

\begin{proof}
\begin{enumerate}
\item
we split this part into the following 5 inequalities
\begin{enumerate}
\item  $$C_1 \delta \geq \tilde{R}_\delta(\textbf{x}) \geq C_2 \delta, \qquad \forall \ a.e. \ \textbf{x} \in \M,$$
\item  $$ \int_{\partial \M}  n^2_{\delta}(\textbf{x}) \tilde{R}_{\delta}(\textbf{x})  d \tau_\textbf{x} 
+ \frac{1}{2\delta} \left \lVert q_{\delta} \right \rVert_{L^2(\partial \M)}^2
\geq C \left \lVert \bar{m}_{\delta} \right \rVert^2_{L^2(\partial \M)}  - \delta \left \lVert m_{\delta} \right \rVert^2_{L^2(\M)}, $$
\item  $$  \int_{\M}  \int_{\M}  (m_{\delta}(\textbf{x})-m_{\delta}(\textbf{y}))^2 \ {R}_{\delta} (\textbf{x}, \textbf{y}) d \mu_\textbf{x} d \mu_\textbf{y} \geq C \left \lVert m_{\delta} -\bar{m}_{\delta} \right \rVert^2_{L^2(\M)}, $$ 
\item $$\frac{1}{2 \delta^2} \int_{\M}  \int_{\M}  (m_{\delta}(\textbf{x})-m_{\delta}(\textbf{y}))^2 \ {R}_{\delta} (\textbf{x}, \textbf{y}) d \mu_\textbf{x} d \mu_\textbf{y} \geq C \left \lVert \nabla \bar{m}_{\delta} \right \rVert^2_{L^2(\M)},$$
\item $$ \left \lVert \nabla \bar{m}_{\delta} \right \rVert^2_{L^2(\M)}+ \left \lVert \bar{m}_{\delta} \right \rVert^2_{L^2(\partial \M)} \geq C \left \lVert  \bar{m}_{\delta} \right \rVert^2_{L^2(\M)},$$
\end{enumerate}
where first inequality implies
\begin{equation}
 \int_{\partial \M}  n^2_{\delta}(\textbf{x}) \tilde{R}_{\delta}(\textbf{x})  d \tau_{\textbf{x}} \geq C \delta \left \lVert  n_{\delta} \right \rVert^2_{L^2(\partial \M)},
 \end{equation}
and the direct sum of the last 4 inequalities illustrate
\begin{equation}
\begin{split}
\frac{1}{2 \delta^2} \int_{\M}  \int_{\M}  (m_{\delta}(\textbf{x})-m_{\delta}(\textbf{y}))^2 \ {R}_{\delta} (\textbf{x}, \textbf{y}) d \mu_\textbf{x} d \mu_\textbf{y} +   \int_{\partial \M}  n^2_{\delta}(\textbf{x}) \tilde{R}_{\delta}(\textbf{x})  d \tau_\textbf{x} 
\\   +\frac{1}{2\delta} \left \lVert  q_{\delta} \right \rVert^2_{L^2(\partial \M)} 
 \geq C \left \lVert  {m}_{\delta} \right \rVert^2_{L^2(\M)},
\end{split}
\end{equation}
we will then conclude \eqref{c50} according to \eqref{Bdelta}. Now let us prove these estimates in order.

\begin{enumerate}
\item Recall the definition of $\tilde{R}$ in \eqref{tilder},
\begin{equation}
\begin{split}
  \tilde{R}_{\delta}(\textbf{x}) = & 4 \delta^2 \int_{\partial \M}  \overset{=}{R}_{\delta} (\textbf{x}, \textbf{y}) d \tau_\textbf{y} -  \int_{\M} \kappa_{\n} (\textbf{x}) \ ((\textbf{x}-\textbf{y}) \cdot \n(\textbf{x}) )^2  \ \bar{R}_{\delta} (\textbf{x},\textbf{y}) d \mu_\textbf{y} .
\end{split}
\end{equation}
The second term is apparently $\mathcal{O}(\delta^2)$ and the first term is $\mathcal{O}(\delta)$. For small $\delta$, we have
\begin{equation}
\int_{\M} \kappa_{\n} (\textbf{x}) \ ((\textbf{x}-\textbf{y}) \cdot \n(\textbf{x}) )^2  \ \bar{R}_{\delta} (\textbf{x},\textbf{y}) d \mu_\textbf{y} \leq C \delta^2 \leq \delta^2 \int_{\partial \M}  \overset{=}{R}_{\delta} (\textbf{x}, \textbf{y}) d \tau_\textbf{y} ,
\end{equation}
hence we can conclude
\begin{equation}
3 \delta^2 \int_{\partial \M}  \overset{=}{R}_{\delta} (\textbf{x}, \textbf{y}) d \tau_\textbf{y} \leq  \tilde{R}_{\delta}(\textbf{x})  \leq 4 \delta^2 \int_{\partial \M}  \overset{=}{R}_{\delta} (\textbf{x}, \textbf{y}) d \tau_\textbf{y}.
\end{equation}
Due to our assumptions on $R$, we have $C_1 \delta \leq \delta^2 \int_{\partial \M}  \overset{=}{R}_{\delta} (\textbf{x}, \textbf{y}) d \tau_\textbf{y} \leq C_2 \delta$ for some constant $C_1, C_2>0$, it is clear that we can have both upper and lower bounds for $\tilde{R}_\delta$.

\item  We apply the inequality $\left \lVert a \right \rVert^2_{L^2(\M)} + \left \lVert b-a \right \rVert^2_{L^2(\M)} \geq C \left \lVert b \right \rVert^2_{L^2(\M)}$ to deduce
\begin{equation} \label{c53}
\begin{split}
& \int_{\partial \M}  n^2_{\delta}(\textbf{x}) \tilde{R}_{\delta}(\textbf{x})  d \tau_\textbf{x} + \frac{1}{2\delta} \left \lVert q_{\delta} \right \rVert_{L^2(\partial \M)}^2 \\
  = & \int_{\partial \M} \frac{1}{\tilde{R}_{\delta}(\textbf{x})} (q_{\delta}(\textbf{x})-\mathcal{D}_{\delta} m_{\delta}(\textbf{x}))^2 d \tau_\textbf{x}
+\frac{1}{2\delta} \left \lVert q_{\delta} \right \rVert_{L^2(\partial \M)}^2  \\
 \geq & \frac{C}{\delta} \int_{\partial \M} (q_{\delta}(\textbf{x})-\mathcal{D}_{\delta} m_{\delta}(\textbf{x}))^2 d \tau_\textbf{x} 
+ \frac{1}{2\delta} \int_{\partial \M}  q_{\delta}^2(\textbf{x}) d \tau_\textbf{x} 
 \geq \frac{C}{ \delta} \int_{\partial \M}    (\mathcal{D}_{\delta} m_{\delta}(\textbf{x}) )^2  d \tau_\textbf{x} \\
 \geq & C  \int_{\partial \M}    (    \int_{\M} m_{\delta} (\textbf{y}) \ (2-  \kappa_{\n} (\textbf{x}) \ (\textbf{x}-\textbf{y}) \cdot \n(\textbf{x})  )  \ \bar{R}_{\delta} (\textbf{x}, \textbf{y}) d \mu_\textbf{y}    )^2       d \tau_\textbf{x}.
\end{split}
\end{equation}
On the other hand, we have
\begin{equation} \label{c52}
\begin{split}
 & \int_{\partial \M}    (    \int_{\M} m_{\delta} (\textbf{y}) \  \kappa_{\n} (\textbf{x}) \ (\textbf{x}-\textbf{y}) \cdot \n(\textbf{x})   \ \bar{R}_{\delta} (\textbf{x}, \textbf{y}) d \mu_\textbf{y}    )^2       d \tau_\textbf{x} \\
& \leq C {\delta^2}  \int_{\partial \M}    (    \int_{\M} \vert m_{\delta} (\textbf{y}) \vert \  \kappa_{\n} (\textbf{x})   \ \bar{R}_{\delta} (\textbf{x}, \textbf{y}) d \mu_\textbf{y}    )^2       d \tau_\textbf{x}  \\
& \leq C {\delta^2}  \int_{\partial \M}    \kappa_{\n}^2(\textbf{x}) \  ( \int_{\M} \vert m_{\delta} (\textbf{y}) \vert^2   \ \bar{R}_{\delta} (\textbf{x}, \textbf{y}) d \mu_\textbf{y}  ) \ (     \int_{\M}    \ \bar{R}_{\delta} (\textbf{x}, \textbf{y}) d \mu_\textbf{y}   )    \  d \tau_\textbf{x}  \\
& \leq C  {\delta^2} \int_{\M} (  \int_{\partial \M}   \kappa_{\n}^2(\textbf{x}) \bar{R}_{\delta} (\textbf{x}, \textbf{y}) d \tau_\textbf{x} ) \  \vert m_{\delta} (\textbf{y}) \vert^2   \  d \mu_\textbf{y}      \leq  C\delta \left \lVert m_{\delta} \right \rVert^2_{L^2(\M)},
\end{split}
\end{equation}
we apply again the inequality $\left \lVert a \right \rVert^2_{L^2(\M)} + \left \lVert b-a \right \rVert^2_{L^2(\M)} \geq C \left \lVert b \right \rVert^2_{L^2(\M)}$ into \eqref{c52} to discover
\begin{equation} \label{c54}
\begin{split}
&  \int_{\partial \M}    (    \int_{\M} m_{\delta} (\textbf{y}) \ (2- \kappa_{\n} (\textbf{x}) \ (\textbf{x}-\textbf{y}) \cdot \n(\textbf{x})  )  \ \bar{R}_{\delta} (\textbf{x}, \textbf{y}) d \mu_\textbf{y}    )^2       d \tau_\textbf{x}   
+ \delta \left \lVert m_{\delta} \right \rVert^2_{L^2(\M)} \\
& \geq C \int_{\partial \M}    (   2 \int_{\M} m_{\delta} (\textbf{y})  \ \bar{R}_{\delta} (\textbf{x}, \textbf{y}) d \mu_\textbf{y}    )^2       d \tau_\textbf{x}
 \geq  C \int_{\partial \M}       \bar{m}^2_\delta (\textbf{x})      d \tau_\textbf{x} 
 = C \left \lVert \bar{m}_{\delta} \right \rVert^2_{L^2(\partial \M)}.
\end{split}
\end{equation}
Hence we combine \eqref{c53} and \eqref{c54} to conclude
\begin{equation}
 \int_{\partial \M}  n^2_{\delta}(\textbf{x}) \tilde{R}_{\delta}(\textbf{x})  d \tau_\textbf{x} + \frac{1}{2\delta} \left \lVert q_{\delta} \right \rVert_{L^2(\partial \M)}^2 + \delta \left \lVert m_{\delta} \right \rVert^2_{L^2(\M)}
 \geq C \left \lVert \bar{m}_{\delta} \right \rVert^2_{L^2(\partial \M)}.
\end{equation}

\item We can calculate
\begin{equation} \label{lm521}
\begin{split}
& \left \lVert \bar{m}_{\delta}-m_{\delta}  \right \rVert^2_{L^2(\M)}= \int_{\M} \big( \int_{\M}  \frac{1}{\bar{\omega}_{\delta}(\textbf{x})}  ( m_{\delta}(\textbf{x}) - m_{\delta} (\textbf{y}) ) \bar{R}_{\delta} (\textbf{x}, \textbf{y}) d \mu_\textbf{y} \big)^2 \ d \mu_\textbf{x} \\
& \leq C    \int_{\M} \big( \int_{\M}  ( m_{\delta}(\textbf{x}) - m_{\delta} (\textbf{y}) ) \bar{R}_{\delta} (\textbf{x}, \textbf{y}) d \mu_\textbf{y} \big)^2 \ d \mu_\textbf{x} \\
& \leq C \int_{\M} \ ( \int_{\M} \bar{R}_{\delta}(\textbf{x}, \textbf{y}) \ d \mu_\textbf{y} \ ) \ ( \int_{\M} \bar{R}_{\delta} (\textbf{x}, \textbf{y}) (m_{\delta}(\textbf{x})-m_{\delta}(\textbf{y}))^2 d \mu_\textbf{y} ) \ d \mu_\textbf{x} \\
& \leq C \int_{\M} \int_{\M} \bar{R}_{\delta} (\textbf{x}, \textbf{y}) (m_{\delta}(\textbf{x})-m_{\delta}(\textbf{y}))^2 d \mu_\textbf{y} d \mu_\textbf{x}  \\
& \leq C \int_{\M} \int_{\M} R_{\delta} (\textbf{x}, \textbf{y}) (m_{\delta}(\textbf{x})-m_{\delta}(\textbf{y}))^2 d \mu_\textbf{y} d \mu_\textbf{x}
\leq C \delta^2 B_{\delta}[m_{\delta}, n_{\delta};  m_{\delta}, n_{\delta}].
\end{split}
\end{equation}
\item This is exactly the equation \eqref{c51}.
\item This is the manifold version of Poincare inequality for $\overset{=}{m}_\delta \in H^1(\M)$.
\end{enumerate}

\item As usual, we split the proof into the following steps
\begin{enumerate}
\item 
\begin{equation}
\left \lVert \nabla \hat{m}_{\delta} \right \rVert^2_{L^2(\M)} \leq \frac{C}{2 \delta^2} \int_{\M}  \int_{\M}  (m_{\delta}(\textbf{x})-m_{\delta}(\textbf{y}))^2 \ {R}_{\delta} (\textbf{x}, \textbf{y}) d \mu_\textbf{x} d \mu_\textbf{y} , 
\end{equation}
\item 
\begin{equation}
 \left \lVert \nabla ( m_{\delta}(\textbf{x})-\hat{m}_{\delta}(\textbf{x}) ) \right \rVert_{L^2(\M)} \leq C (\delta^2 F(\delta)  \left \lVert  p_0 \right \rVert_{H^\beta(\M)} + \delta^{\frac{1}{2}}\left \lVert n_{\delta} \right \rVert_{L^2(\partial \M)}  ), 
 \end{equation}
\item  
\begin{equation}
\left \lVert {m}_{\delta} \right \rVert^2_{L^2(\M)} + \delta \left \lVert n_{\delta} \right \rVert_{L^2(\partial \M)}^2  \leq C(B_{\delta}[m_{\delta}, n_{\delta}; m_{\delta}, n_{\delta}] + \frac{1}{ \delta }  \left \lVert q_{\delta} \right \rVert^2_{L^2(\partial \M)}), 
\end{equation}
\item 
\begin{equation}
\begin{split}
 C \ B_{\delta}[m_{\delta}, n_{\delta}; m_{\delta}, n_{\delta}] \leq &
\frac{1}{2} \left \lVert m_{\delta} \right \rVert_{H^1(\M)}^2+  \frac{\delta}{2} \left \lVert n_{\delta} \right \rVert_{L^2(\partial \M)}^2 \\
& +   C_1 \ ( \ G^2(\delta)  \left \lVert  p_0 \right \rVert^2_{H^\beta(\M)}  + \frac{1}{\delta} \left \lVert q_{\delta} \right \rVert^2_{L^2(\partial \M)} ), 
\end{split}
\end{equation}
\end{enumerate}
where the first $3$ inequalities imply
\begin{equation} \label{return1}
\begin{split}
\left \lVert {m}_{\delta} \right \rVert^2_{H^1(\M)} + \delta \left \lVert n_{\delta} \right \rVert_{L^2(\partial \M)}^2  \leq C(B_{\delta}[m_{\delta}, n_{\delta}; m_{\delta}, n_{\delta}] + \frac{1}{ \delta }  \left \lVert q_{\delta} \right \rVert^2_{L^2(\partial \M)} + \\ \delta^4 F^2(\delta)  \left \lVert  p_0 \right \rVert^2_{H^\beta(\M)} ),
\end{split}
\end{equation}
we will then deduce \eqref{c55} by combining the $4^{th}$ inequality and \eqref{return1}. Now let us prove them in order.
\begin{enumerate}
\item This is exactly the inequality \eqref{c12}.
\item This inequality is derived from the equation $\mathcal{L}_{\delta} m_{\delta}(\textbf{x}) - \mathcal{G}_{\delta} n_{\delta}(\textbf{x})
=  p_{\delta}(\textbf{x})$, or in other words,
\begin{equation}
\begin{split}
\frac{1}{ \delta^2} \int_{\M} (m_{\delta}(\textbf{x})-m_{\delta}(\textbf{y})) \ {R}_{\delta} (\textbf{x}, \textbf{y}) d \mu_\textbf{y} -
 \int_{\partial \M} n_{\delta} (\textbf{y}) \ (2+ \kappa_{\n} (\textbf{y}) \ (\textbf{x}-\textbf{y}) \cdot \n (\textbf{y}) ) \\ \bar{R}_{\delta} (\textbf{x}, \textbf{y})  d \tau_\textbf{y}
 =p_{\delta}(\textbf{x}).
 \end{split}
\end{equation}
Recall the definition of $\bar{m}_\delta$, we have
\begin{equation}
\begin{split}
\frac{1}{ \delta^2} \ \omega_{\delta}(\textbf{x}) & ( m_{\delta}(\textbf{x})-\hat{m}_{\delta}(\textbf{x}) ) 
-\int_{\partial \M} n_{\delta} (\textbf{y}) \ (2+ \kappa_{\n} (\textbf{y}) \ (\textbf{x}-\textbf{y}) \cdot \n (\textbf{y}) ) \ \bar{R}_{\delta} (\textbf{x}, \textbf{y})  d \tau_\textbf{y} \\
  & =p_{\delta}(\textbf{x}),
\end{split}
\end{equation}
Hence we obtain
\begin{equation} \label{c13}
\begin{split}
& \left \lVert  \nabla ( m_{\delta}-\hat{m}_{\delta} ) \right \rVert_{L^2(\M)} \leq  \delta^2 \left \lVert \nabla  \frac{p_{\delta}(\textbf{x})}{\omega_{\delta}(\textbf{x})} \right \rVert_{L^2(\M)} \\
& +  \delta^2 \left \lVert \nabla \ \int_{\partial \M} \ \frac{1}{\omega_{\delta}(\textbf{x})} \ n_{\delta} (\textbf{y}) \ (2+ \kappa_{\n} (\textbf{y}) \ (\textbf{x}-\textbf{y}) \cdot \n (\textbf{y}) ) \ \bar{R}_{\delta} (\textbf{x}, \textbf{y})  d \tau_\textbf{y} \right \rVert_{{L}_{\textbf{x}}^2(\M)}.
\end{split}
\end{equation}

The first term of \eqref{c13} can be controlled by
\begin{equation} \label{c14}
\begin{split}
& \left \lVert \nabla  \frac{p_{\delta}(\textbf{x})}{\omega_{\delta}(\textbf{x})} \right \rVert_{{L}^2(\M)}=\left \lVert \frac{ \omega_{\delta}(\textbf{x}) \ \nabla p_{\delta}(\textbf{x})- p_{\delta}(\textbf{x}) \nabla \omega_{\delta}(\textbf{x}) }{ \omega^2_{\delta}(\textbf{x})} \right \rVert_{L^2(\M)} \\
& \leq 2 \left \lVert \frac{1}{\omega_{\delta}(\textbf{x})} \ \nabla p_{\delta}(\textbf{x}) \right \rVert_{{L}^2(\M)} + 2 \left \lVert p_{\delta}(\textbf{x}) \frac{  \nabla \omega_{\delta}(\textbf{x}) }{\omega^2_{\delta}(\textbf{x})} \right \rVert_{{L}^2(\M)} \\
& \leq C (\left \lVert  \nabla p_{\delta}(\textbf{x}) \right \rVert_{{L}^2(\M)} + \frac{1}{\delta} \left \lVert p_{\delta}(\textbf{x})  \right \rVert_{{L}^2(\M)}) \leq C \ F(\delta)  \left \lVert  p_0 \right \rVert_{H^\beta(\M)},
\end{split}
\end{equation}
where the second inequality results from the fact that $C_1 \leq \omega_{\delta}(\textbf{x}) \leq C_2$ and 
\begin{equation}
\begin{split}
& \vert \nabla \omega_{\delta}(\textbf{x}) \vert= \vert \int_{\M} \nabla_{\M}^{\textbf{x}} \ R_{\delta}(\textbf{x},\textbf{y}) d \mu_\textbf{y} \ \vert= \vert\int_{\M} \nabla_\textbf{y} \ R_{\delta}(\textbf{x},\textbf{y}) d \mu_\textbf{y} \ \vert \\
& =\vert \int_{\partial \M} R_{\delta}(\textbf{x},\textbf{y}) \ \n(\textbf{y}) \ d \tau_\textbf{y} \ \vert \leq  \int_{\partial \M} R_{\delta}(\textbf{x},\textbf{y})  d \tau_\textbf{y} \leq C \frac{1}{\delta}, \qquad \forall \ \textbf{x} \in \M.
\end{split}
\end{equation}
The control on the second term of \eqref{c13} is more complicated in calculation. Similar to \eqref{c14}, we have
\begin{equation} \label{c15}
\begin{split}
& \left \lVert \nabla \ \int_{\partial \M} \ \frac{1}{\omega_{\delta}(\textbf{x})} \ n_{\delta} (\textbf{y}) \ (2+ \kappa_{\n} (\textbf{y}) \ (\textbf{x}-\textbf{y}) \cdot \n (\textbf{y}) ) \ \bar{R}_{\delta} (\textbf{x}, \textbf{y})  d \tau_\textbf{y} \right \rVert_{{L}_{\textbf{x}}^2(\M)} \\
& \leq C \ ( \left \lVert \ \nabla \ \int_{\partial \M}  \ n_{\delta} (\textbf{y}) \ (2+\kappa_{\n} (\textbf{y}) \ (\textbf{x}-\textbf{y}) \cdot \n (\textbf{y}) ) \ \bar{R}_{\delta} (\textbf{x}, \textbf{y})  d \tau_\textbf{y} \ \right \rVert_{{L}_{\textbf{x}}^2(\M)} \\
& +\frac{1}{\delta} \left \lVert \ \int_{\partial \M}  \ n_{\delta} (\textbf{y}) \ (2+ \kappa_{\n} (\textbf{y}) \ (\textbf{x}-\textbf{y}) \cdot \n (\textbf{y}) ) \ \bar{R}_{\delta} (\textbf{x}, \textbf{y})  d \tau_\textbf{y} \ \right \rVert_{{L}_{\textbf{x}}^2(\M)} ) \\
& \leq C \ ( \ \left \lVert \ \int_{\partial \M} 3 \ \vert n_{\delta}(\textbf{y}) \vert \ \frac{1}{2\delta^2} \vert\textbf{x}-\textbf{y}\vert \ R_{\delta}(\textbf{x},\textbf{y})  d \tau_\textbf{y} \ \right \rVert_{{L}_{\textbf{x}}^2(\M)} \\
& + \left \lVert \ \int_{\partial \M}  \ \vert \ n_{\delta} (\textbf{y}) \vert \ \bar{R}_{\delta} (\textbf{x}, \textbf{y})  d \tau_\textbf{y} \ \right \rVert_{L^2(\M)} \\
& + \ \frac{1}{\delta} \ \left \lVert \ \int_{\partial \M}  \ 3 \ \vert  n_{\delta} (\textbf{y})  \vert \ \bar{R}_{\delta} (\textbf{x}, \textbf{y})  d \tau_\textbf{y} \ \right \rVert_{{L}_{\textbf{x}}^2(\M)} ) \\
& \leq \frac{C}{\delta}  \ \left \lVert \ \int_{\partial \M}  \vert n_{\delta}(\textbf{y}) \vert \ R_{\delta}(\textbf{x},\textbf{y})  d \tau_\textbf{y} \ \right \rVert_{{L}_{\textbf{x}}^2(\M)} \\
& \leq \frac{C}{\delta} ( \int_{\M}  (\int_{\partial \M}  n^2_{\delta}(\textbf{y})  R_{\delta}(\textbf{x},\textbf{y}) \ d \tau_\textbf{y} )( \int_{\partial \M} R_{\delta}(\textbf{x},\textbf{y}) \ d \tau_\textbf{y} ) d \mu_\textbf{x} )^{\frac{1}{2}} \\
& \leq \frac{C}{\delta}   (\int_{\partial \M}   \int_{\M} \frac{1}{\delta} \ n^2_{\delta}(\textbf{y}) \ R_{\delta}(\textbf{x},\textbf{y}) \ d \mu_\textbf{x} \  d \tau_\textbf{y}     )^{\frac{1}{2}}  \leq C \delta^{-\frac{3}{2}} \left \lVert n_{\delta} \right \rVert_{L^2(\partial \M)}.
\end{split}
\end{equation}
We therefore conclude \eqref{c13}, \eqref{c14} and \eqref{c15} to discover
\begin{equation}
\left \lVert \nabla ( m_{\delta}(\textbf{x})-\hat{m}_{\delta}(\textbf{x}) ) \right \rVert_{L^2(\M)} \leq C (\delta^2 F(\delta)  \left \lVert  p_0 \right \rVert_{H^\beta(\M)} + \delta^{\frac{1}{2}}\left \lVert n_{\delta} \right \rVert_{L^2(\partial \M)}  ),
\end{equation}
\item This is exactly the first part of the lemma.

\item
In fact, the bilinear form of the system $\eqref{c16}$ gives 
\begin{equation}  \label{lm534}
\begin{split}
2C \ B_{\delta} & [m_{\delta}, n_{\delta}; m_{\delta}, n_{\delta}] =2C \int_{\M} m_{\delta}(\textbf{x}) p_{\delta}(\textbf{x}) d \mu_\textbf{x}+2 C \int_{\partial \M} n_{\delta}(\textbf{x}) q_{\delta}(\textbf{x}) d \tau_\textbf{x} \\
 \leq  & 2 C \ G(\delta)  ( \left \lVert  m_{\delta} \right \rVert_{H^1(\M)}   + \left \lVert  \bar{m}_{\delta} \right \rVert_{H^1(\M)}    + \left \lVert  \overset{=}{m}_{\delta} \right \rVert_{H^1(\M)}    ) \left \lVert  p_0 \right \rVert_{H^\beta(\M)} \\
  & +2 C \left \lVert  n_{\delta} \right \rVert_{L^2(\partial \M)} \left \lVert  q_{\delta} \right \rVert_{L^2(\partial \M)} .
\end{split}
\end{equation}

Similar as the equation \eqref{c15}, we follow the calculation of \eqref{lm521} to obtain
\begin{equation}  \label{lm531}
\begin{split}
 \left \lVert \bar{m}_{\delta}-m_{\delta}  \right \rVert^2_{H^1(\M)} &= \int_{\M} \big( \int_{\M}  \frac{1}{\bar{\omega}_{\delta}(\textbf{x})}  ( m_{\delta}(\textbf{x}) - m_{\delta} (\textbf{y}) ) \bar{R}_{\delta} (\textbf{x}, \textbf{y}) d \mu_\textbf{y} \big)^2 \ d \mu_\textbf{x} \\
 & +   \int_{\M} \big( \int_{\M} \nabla^{\textbf{x}}_{\M}  \frac{1}{\bar{\omega}_{\delta}(\textbf{x})}  ( m_{\delta}(\textbf{x}) - m_{\delta} (\textbf{y}) ) \bar{R}_{\delta} (\textbf{x}, \textbf{y}) d \mu_\textbf{y} \big)^2 \ d \mu_\textbf{x} \\
 \leq & C (\delta^2 B_{\delta}[m_{\delta}, n_{\delta};  m_{\delta}, n_{\delta}] + B_{\delta}[m_{\delta}, n_{\delta};  m_{\delta}, n_{\delta}] )  \\
 \leq & C  B_{\delta}[m_{\delta}, n_{\delta};  m_{\delta}, n_{\delta}] .
\end{split}
\end{equation}
By substituting $\bar{R}_{\delta} $ by $\overset{=}{R}$ in \eqref{lm531}, we can obtain the following property for $\overset{=}{m}_{\delta}$:
\begin{equation}
  \left \lVert \overset{=}{m}_{\delta}-m_{\delta}  \right \rVert^2_{H^1(\M)}  \leq C  B_{\delta}[m_{\delta}, n_{\delta};  m_{\delta}, n_{\delta}] .
\end{equation}
This indicates
\begin{equation} \label{lm532}
\begin{split}
& 2 C \ G(\delta)  ( \left \lVert  m_{\delta} \right \rVert_{H^1(\M)}   + \left \lVert  \bar{m}_{\delta} \right \rVert_{H^1(\M)}    + \left \lVert  \overset{=}{m}_{\delta} \right \rVert_{H^1(\M)}    ) \left \lVert  p_0 \right \rVert_{H^\beta(\M)}  \\
\leq &  2 C \ G(\delta)  ( 3 \left \lVert  m_{\delta} \right \rVert_{H^1(\M)}  +C_0    B_{\delta}[m_{\delta}, n_{\delta};  m_{\delta}, n_{\delta}]    ) \left \lVert  p_0 \right \rVert_{H^\beta(\M)} \\
\leq &  \frac{1}{2} \left \lVert  m_{\delta} \right \rVert^2_{H^1(\M)} + C \ B_{\delta}  [m_{\delta}, n_{\delta}; m_{\delta}, n_{\delta}]   +( 18 C^2  + C   C_0^2)  G^2(\delta) \left \lVert  p_0 \right \rVert_{H^\beta(\M)} ,
\end{split}
\end{equation}
On the other hand, we have
\begin{equation} \label{lm533}
2 C \left \lVert  n_{\delta} \right \rVert_{L^2(\partial \M)} \left \lVert  q_{\delta} \right \rVert_{L^2(\partial \M)}
\leq \frac{\delta}{2}  \left \lVert  n_{\delta} \right \rVert_{L^2(\partial \M)} \ + \frac{2C^2}{\delta}  \left \lVert  q_{\delta} \right \rVert_{L^2(\partial \M)} ,
\end{equation}

We then combine the equations \eqref{lm534} \eqref{lm532} \eqref{lm533} to obtain
\begin{equation}
\begin{split}
C \ B_{\delta} & [m_{\delta}, n_{\delta}; m_{\delta}, n_{\delta}] \leq  \frac{1}{2} \left \lVert  m_{\delta} \right \rVert^2_{H^1(\M)} 
+ \frac{\delta}{2}  \left \lVert  n_{\delta} \right \rVert_{L^2(\partial \M)} \\
& + ( 18 C^2 + C    C_0^2) G^2(\delta) \left \lVert  p_0 \right \rVert_{H^\beta(\M)} + \frac{2C^2}{\delta}  \left \lVert  q_{\delta} \right \rVert_{L^2(\partial \M)}.
\end{split}
\end{equation}

Hence we have completed our proof.

\end{enumerate}
\end{enumerate}
\end{proof}

\subsection{Proof of (b) in Page 9}
\begin{proof}

For any $u_{\delta}, w_{\delta} \in L^2(\M)$, we can calculate
\begin{equation} 
\begin{split}
 \int_{\M} & ( \mathcal{L}_{\delta}u_{\delta} (\textbf{x})+(\mathcal{G}_{\delta} \frac{\mathcal{D}_{\delta} u_{\delta} }{  \tilde{R}_{\delta} } )(\textbf{x})  ) w_{\delta}(\textbf{x}) d \mu_\textbf{x} 
=\int_{\M} \int_{\M} w_{\delta} (\textbf{x}) (u_{\delta} (\textbf{x})-u_{\delta}(\textbf{y})) R_{\delta}(\textbf{x},\textbf{y})  d \mu_\textbf{y} d \mu_\textbf{x}  \\ 
 + & \int_{\M}  \int_{\partial \M}  \int_{\M} u (\s) \ (2 + \kappa(\textbf{y}) \ (\textbf{y}-\s) \cdot \n(\textbf{y})  )  \ \bar{R}_{\delta} (\textbf{y}, \s) d \s  \\ &    (2- \kappa(\textbf{y}) \ (\textbf{x}-\textbf{y}) \cdot \n (\textbf{y}) ) \ \bar{R}_{\delta} (\textbf{x}, \textbf{y})  d \tau_\textbf{y}  \ w_{\delta}(\textbf{x}) d \mu_\textbf{x},
\end{split}
\end{equation}
here
\begin{equation}
\begin{split}
& \big\vert \int_{\M} \int_{\M} w_{\delta} (\textbf{x}) (u_{\delta} (\textbf{x})-u_{\delta}(\textbf{y})) R_{\delta}(\textbf{x},\textbf{y})  d \mu_\textbf{y} d \mu_\textbf{x}  \big\vert \\
& \leq C_{\delta} (\int_{\M} \int_{\M} \vert w_{\delta} (\textbf{x}) u_{\delta} (\textbf{x})\vert d \mu_\textbf{y} d \mu_\textbf{x} + \int_{\M} \int_{\M} \vert w_{\delta} (\textbf{x}) u_{\delta} (\textbf{y})\vert d \mu_\textbf{y} d \mu_\textbf{x} ) \\
 & \leq C_{\delta}( \left \lVert w_{\delta} \right \rVert_{L^2(\M)} \left \lVert u_{\delta} \right \rVert_{L^2(\M)} +  \left \lVert w_{\delta} \right \rVert_{L^1(\M)} \left \lVert u_{\delta} \right \rVert_{L^1(\M)}  ) \leq C_{\delta} \left \lVert w_{\delta} \right \rVert_{L^2(\M)} \left \lVert u_{\delta} \right \rVert_{L^2(\M)} ;
 \end{split}
\end{equation}
and
\begin{equation}
\begin{split}
& \big\vert \int_{\M}  \int_{\partial \M}  \int_{\M} u_{\delta} (\s) \ (2 -\kappa(\textbf{y}) \ (\textbf{y}-\s) \cdot \n(\textbf{y})  )  \ \bar{R}_{\delta} (\textbf{y}, \s) d \s     \\ & (2+\kappa(\textbf{y}) \ (\textbf{x}-\textbf{y}) \cdot \n (\textbf{y}) ) \ \bar{R}_{\delta} (\textbf{x}, \textbf{y})  d \tau_\textbf{y}  \ w_{\delta}(\textbf{x}) d \mu_\textbf{x} \big\vert \\
& \leq C_{\delta} \int_{\M}  \int_{\partial \M}  \int_{\M} \vert u_{\delta} (\s) w_{\delta}(\textbf{x}) \vert  \ d \mu_{\s} d \tau_\textbf{y} d \mu_\textbf{x} 
\leq C_{\delta} \left \lVert w_{\delta} \right \rVert_{L^1(\M)} \left \lVert u_{\delta} \right \rVert_{L^1(\M)}  \\ &
\leq C_{\delta} \left \lVert w_{\delta} \right \rVert_{L^2(\M)} \left \lVert u_{\delta} \right \rVert_{L^2(\M)} .
\end{split}
\end{equation}
The above 2 inequalities implies that 
\begin{equation} \label{fix4}
\int_{\M} ( \mathcal{L}_{\delta}u_{\delta} (\textbf{x}) + (\mathcal{G}_{\delta} \frac{\mathcal{D}_{\delta} u_{\delta} }{  \tilde{R}_{\delta} } )(\textbf{x})  ) w_{\delta}(\textbf{x}) d \mu_\textbf{x}  \leq C_{\delta} \left \lVert w_{\delta} \right \rVert_{L^2(\M)} \left \lVert u_{\delta} \right \rVert_{L^2(\M)},
 \end{equation}
  where $C_{\delta}$ is a constant depend on $\delta$ and independent on $u_{\delta}$ and $w_{\delta}$.
  \end{proof}

\subsection{Proof of (c) in Page 9}
\begin{proof}
We first split the right hand side into
\begin{equation}
\begin{split}
& \int_{\M} w_{\delta}(\textbf{x}) \mathcal{P}_{\delta} f(\textbf{x}) d \mu_\textbf{x} = \int_{\M} w_{\delta}(\textbf{x}) \int_{\M} f(\textbf{y}) \ \bar{R}_{\delta} (\textbf{x}, \textbf{y}) d \mu_\textbf{y} d \mu_\textbf{x}  \\ &
-\int_{\M} w_{\delta}(\textbf{x}) \int_{\partial \M} ((\textbf{x}-\textbf{y}) \cdot  \n(\textbf{y}) )  \ f(\textbf{y}) \ \bar{R}_{\delta} (\textbf{x}, \textbf{y}) d \tau_\textbf{y} d \mu_\textbf{x}, \\
\end{split}
\end{equation}
and we can calculate
\begin{equation}   \label{sm1}
\begin{split}
& \int_{\M} w_{\delta}(\textbf{x}) \int_{\M} f(\textbf{y}) \ \bar{R}_{\delta} (\textbf{x}, \textbf{y}) d \mu_\textbf{y} d \mu_\textbf{x}   
\leq \Big[ \int_{\M} w_{\delta}^2(\textbf{x}) d \mu_\textbf{x} \ \int_{\M} ( \int_{\M} f(\textbf{y}) \bar{R}_{\delta}(\textbf{x},\textbf{y}) d \mu_\textbf{y} )^2 d \mu_\textbf{x} \Big]^{\frac{1}{2}} \\
& \leq \Big[ \int_{\M} w_{\delta}^2(\textbf{x}) d \mu_\textbf{x} \ \int_{\M} ( \int_{\M} f^2(\textbf{y}) \bar{R}_{\delta}(\textbf{x},\textbf{y}) d \mu_\textbf{y} \ \int_{\M} \bar{R}_{\delta}(\textbf{x},\textbf{y}) d \mu_\textbf{y} ) d \mu_\textbf{x} \ \Big]^{\frac{1}{2}}  \\
& \leq \Big[ \int_{\M} w_{\delta}^2(\textbf{x}) d \mu_\textbf{x} \ \int_{\M}  \int_{\M}  f^2(\textbf{y})  \bar{R}_{\delta}(\textbf{x},\textbf{y})  d \mu_\textbf{y}  d \mu_\textbf{x}  \ \Big]^{\frac{1}{2}}  \\
& \leq  \Big[ \int_{\M} w_{\delta}^2(\textbf{x}) d \mu_\textbf{x} \ \int_{\M}    f^2(\textbf{y})  d \mu_\textbf{y}   \ \Big]^{\frac{1}{2}} 
\leq \left \lVert f \right \rVert_{L^{2}( \M)} \left \lVert w_{\delta} \right \rVert_{L^2(\M)} 
\leq  \left \lVert f \right \rVert_{H^1( \M)} \left \lVert w_{\delta} \right \rVert_{L^2(\M)},
\end{split}
\end{equation}

\begin{equation} \label{sm2}
\begin{split}
& \int_{\M} w_{\delta}(\textbf{x}) \int_{\partial \M} ((\textbf{x}-\textbf{y}) \cdot  \n(\textbf{y}) )  \ f(\textbf{y}) \ \bar{R}_{\delta} (\textbf{x}, \textbf{y}) d \tau_\textbf{y} d \mu_\textbf{x}  \\
& \leq \delta \int_{\M} \vert w_{\delta}(\textbf{x})\vert \int_{\partial \M}  \ \vert f(\textbf{y})\vert \ \bar{R}_{\delta} (\textbf{x}, \textbf{y}) d \tau_\textbf{y} d \mu_\textbf{x} \\
 & \leq \delta \Big[ \int_{\M} w_{\delta}^2(\textbf{x}) d \mu_\textbf{x} \ \int_{\M} ( \int_{\partial \M} \vert f(\textbf{y})\vert \bar{R}_{\delta}(\textbf{x},\textbf{y}) d \tau_\textbf{y} )^2 d \mu_\textbf{x} \Big]^{\frac{1}{2}} \\
& \leq \delta \Big[ \int_{\M} w_{\delta}^2(\textbf{x}) d \mu_\textbf{x} \ \int_{\M} ( \int_{\partial \M} f^2(\textbf{y}) \bar{R}_{\delta}(\textbf{x},\textbf{y}) d \tau_\textbf{y} \ \int_{\partial \M} \bar{R}_{\delta}(\textbf{x},\textbf{y}) d \mu_\textbf{y} ) d \mu_\textbf{x} \ \Big]^{\frac{1}{2}}  \\
& \leq \delta^{\frac{1}{2}} \Big[ \int_{\M} w_{\delta}^2(\textbf{x}) d \mu_\textbf{x} \ \int_{\M}  \int_{\partial \M}  f^2(\textbf{y})  \bar{R}_{\delta}(\textbf{x},\textbf{y})  d \tau_\textbf{y}  d \mu_\textbf{x}  \ \Big]^{\frac{1}{2}}  \\
& \leq \delta^{\frac{1}{2}} \Big[ \int_{\M} w_{\delta}^2(\textbf{x}) d \mu_\textbf{x} \ \int_{\partial \M}    f^2(\textbf{y})  d \tau_\textbf{y}   \ \Big]^{\frac{1}{2}} 
\leq \delta^{\frac{1}{2}} \left \lVert f \right \rVert_{L^{2}( \partial \M)} \left \lVert w_{\delta} \right \rVert_{L^2(\M)} \\ &
\leq \left \lVert f \right \rVert_{H^1( \M)} \left \lVert w_{\delta} \right \rVert_{L^2(\M)};
\end{split}
\end{equation}

in addition, we have
\begin{equation} \label{sm3}
\begin{split}
& \int_{\M} \  w_{\delta}(\textbf{x}) \ \mathcal{G}_{\delta} (\frac{ \mathcal{Q}_{\delta} f(\textbf{x}) }{\tilde{R}_{\delta}(\textbf{x})} ) \  d \mu_\textbf{x}   \\ & =\int_{\M} w_{\delta}(\textbf{x}) \ \int_{\partial \M} \frac{ \mathcal{Q}_{\delta} f(\textbf{y}) }{\tilde{R}_{\delta}(\textbf{y})}   (2+\kappa_{\n} (\textbf{y})(\textbf{x}-\textbf{y}) \cdot \n(\textbf{y})) \ \bar{R}_{\delta}(\textbf{x},\textbf{y}) d \tau_\textbf{y} \ d \mu_\textbf{x} \\
& \leq  \int_{\M} \ \vert w_{\delta}(\textbf{x})  \vert \ \int_{\partial \M}  \frac{C \ \delta^2 }{\delta} \ \vert f(\textbf{y}) \vert \ 3 \ \bar{R}_{\delta}(\textbf{x},\textbf{y}) d \tau_\textbf{y} \ d \mu_\textbf{x} \\ &
\leq C \delta \int_{\partial \M} \ \vert f(\textbf{y}) \vert  \ \int_{\M} \ \vert w_{\delta}(\textbf{x})  \vert \ \bar{R}_{\delta}(\textbf{x},\textbf{y})  d \mu_\textbf{x} d \tau_\textbf{y} \\
& \leq C \delta \Big[ \int_{\partial \M}  f^2(\textbf{y}) d \tau_\textbf{y} \int_{\partial \M} ( \int_{\M} \ w_{\delta}^2(\textbf{x}) \ \bar{R}_{\delta}(\textbf{x},\textbf{y})  d \mu_\textbf{x}  ) \ ( \int_{\M}  \ \bar{R}_{\delta}(\textbf{x},\textbf{y})  d \mu_\textbf{x} )    d \tau_\textbf{y} \Big]^{\frac{1}{2}} \\
& \leq C \delta  \Big[ \int_{\partial \M}  f^2(\textbf{y}) d \tau_\textbf{y}  \int_{\M} \ \frac{1}{\delta} \ w_{\delta}^2(\textbf{x}) d \mu_\textbf{x}    \Big]^{\frac{1}{2}} \leq C \delta^{\frac{1}{2}} \left \lVert  f \right \rVert_{L^2(\partial \M)} \left \lVert w_{\delta} \right \rVert_{L^2(\M)}  \\ &
\leq C \left \lVert f \right \rVert_{H^1(\M)} \left \lVert w_{\delta} \right \rVert_{L^2(\M)},
\end{split}
\end{equation}

The above three inequalities reveal
\begin{equation} \label{c31}
\int_{\M} w_{\delta}(\textbf{x}) \mathcal{P}_{\delta} f(\textbf{x}) d \mu_\textbf{x} + \int_{\M} w_{\delta}(\textbf{x}) \ \mathcal{G}_{\delta}  (\frac{ \mathcal{Q}_{\delta} f(\textbf{x}) }{\tilde{R}_{\delta}(\textbf{x})} ) d \mu_\textbf{x} \leq C \left \lVert f \right \rVert_{H^1(\M)} \left \lVert w_{\delta} \right \rVert_{L^2(\M)}.
\end{equation}
\end{proof}

\end{document}